\theoremstyle{definition}
\newtheorem{definition}{Definition}[section]
\theoremstyle{theorem}
\newtheorem{theorem}[definition]{Theorem}
\newtheorem{proposition}[definition]{Proposition}
\newtheorem{corollary}[definition]{Corollary}
\newtheorem{assumption}[definition]{Assumption}
\theoremstyle{definition}
\begin{document}

\begin{frontmatter}

%% Title, authors and addresses

%% use the tnoteref command within \title for footnotes;
%% use the tnotetext command for theassociated footnote;
%% use the fnref command within \author or \address for footnotes;
%% use the fntext command for theassociated footnote;
%% use the corref command within \author for corresponding author footnotes;
%% use the cortext command for theassociated footnote;
%% use the ead command for the email address,
%% and the form \ead[url] for the home page:
%% \title{Title\tnoteref{label1}}
%% \tnotetext[label1]{}
%% \author{Name\corref{cor1}\fnref{label2}}
%% \ead{email address}
%% \ead[url]{home page}
%% \fntext[label2]{}
%% \cortext[cor1]{}
%% \address{Address\fnref{label3}}
%% \fntext[label3]{}
\title{  Malliavin differentiability and regularity of densities in semi-linear stochastic delay equations driven by  weighted fractional Brownian motion}
%% use optional labels to link authors explicitly to addresses:
%%\author[label1,label2,label3]{F.Mahmoudi, R. Ezzati, M. Khodabin}
\author{ Mahdieh Tahmasebi}
\address{Department of applied Mathematics, Tarbiat Modares University, P.O. Box 14115-134, Tehran, Iran.} %%\address[label2]{R. Ezzati}, \address[label3]{M. Khodabin}
%%\centerline{ }
%\author{\fnref{myfootnote}}
%\fntext[myfootnote]{Email address:m-khodabin@kiau.ac.ir}
%\author[]{\corref{mycorrespondingauthor}}
%\cortext[mycorrespondingauthor]{Corresponding author}
%%\ead{Email:Fatemeh.mahmoudi63@yahoo.com}
%% \address[label3]{m-khodabin@kiau.ac.ir}
%
%\author{}
%
%\address{}

\begin{abstract}
In this work, we will show the existence and uniqueness of the solution to semi-linear stochastic differential equations driven by weighted fractional Brownian motion with delay. We also prove smoothness of the density of the solution with respect to Lebesgue's measure on $\mathbb{R}^d$ for $d \geq 1$.
\end{abstract}

\begin{keyword}
stochastic delay differential equations, Malliavin calculus, weighted fractional Brownian motion, regular density.\\
\MSC[2010]: 60H07, 60H05, 60G22.

%% PACS codes here, in the form: \PACS code \sep code

%% MSC codes here, in the form: \MSC code \sep code
%% or \MSC[2008] code \sep code (2000 is the default)

\end{keyword}

\end{frontmatter}
%% \linenumbers
% main text
%%\linenumbers
%% PACS codes here, in the form: \PACS code \sep code
%% MSC codes here, in the form: \MSC code \sep code
%% or \MSC[2008] code \sep code (2000 is the default)
%% \linenumbers
%% main text
%%\linenumbers
%%%%%%%%%%%%%%%%%%%%%%%%%%%%%%%%%%%%%%%%%%%%%%%%%%%%%%%
%%%%%%%%%%%%%%%%%%%%%%%%%%%%%%%%%%%%%%%%%%%%%%%%%%%%%%%
\section{Introduction}\label{intro}
 Theory of  stochastic differential equations (SDEs) driven by fractional Brownian motion (fBm) is widely studied using different approach (see for instance  \cite{Nua2002, Nua2003, NuaRas2002, 3, 4, 5}). These models have many applications in finance, telecommunications, image processing and turbulence; \cite{Nor95, Elwo, 10, Shi99}.\\
One of great interest area in study of fractional SDEs is on investigating the existence, uniqueness and regularity of the density of solutions to SDEs.
When $H > \frac12$, the existence and uniqueness of the solution are obtained by Lyons in \cite{Lyo94}, Zahle in \cite{Zah01}, Nualart-Rasc{a}nu in \cite{NuaRas2002} and by using Young's integration theory in \cite{BaH07}. The problem of existence and uniqueness of solution is considered in \cite{NeuNor08} for a Hurst parameter $H >1/3$, and 
extended to $H >1/4$ in \cite{Tin09}. 
Recently, when the drift is locally Lipschitz  and unbounded in the neighborhood of the
origin, particularly to the mean-reverting stochastic volatility models in finance, the existence and positivity of a unique solution have studied in \cite{zh20}.\\
In the presence of delay in SDEs, Wei and Wang \cite{Wei07} considered the problem of the existence and uniqueness of the solution to stochastic functional differential equations with infinite
delay. Caraballo et al. \cite{Cara11} have studied the existence, uniqueness and exponential asymptotic
behavior of mild solutions to stochastic delay evolution equations driven by a fractional
Brownian motion. In 2015, Boudaoui et al. \cite{BouCar15} showed the existence of mild solutions to stochastic impulsive evolution equations with time delays  via a new fixed point analysis approach. It is worth mentioning that in these two last atricles, the diffusion coefficient functions are dependent only on the parameter $t$ and are not dependent onthe state process. More studies have discussed  in\cite{FR06} and then in  \cite{Bou12} on stochastic delayed differential equations (SDDEs) with fractional Brownian motion when the diffusion functions depend upon the past value of the state process with time delay $\tau$  (actually, as $B=0$ in SDDE (\ref{s1})).   \\
The Malliavin calculus is a useful tool to study the regularity of the densities of the
solutions to SDEs. We refer the reader to \cite{2} and \cite{Nua1998} for more details of this
theory. The problem of smoothness of density of the solutions to stochastic differential equations driven by a fractional
Brownian motion with Hurst parameter greater than $1/2$ in the one-dimensional case is solved in \cite{NS06}
by using Doss-S\"{u}ssman methods and in \cite{NS05} and \cite{HN06} under ellipticity assumptions. 
Rovira and Ferrante \cite{FR06} established the existence and regularity of density of SDEs with delay via Young's integration. The authors considered a SDE with the diffusion coefficient function depends upon only the past value of solution with time delay; i.e,  it depends on $X_{t-\tau}$ for a solution $X(t)$ and time delay $\tau$. 
In 2012, the authors \cite{Tin12} have shown the existence of a $C^\infty$-density  to a general class of Young delay equations driven by fBm with Hurst parameter $H> 1/2$.\\
The weighted fractional Brownian motion (wfBm) is a general expression for the fractional Brownian motion introduced in Bojdecki et al. \cite{BoGo08}. Due to the memory effects of various phenomena in the real world (such as  a wide variety of natural and financial markets), it would be reasonable
to replace fBm by weighted fBm.  In this erea, a few studies can be found in some especial subjects; see for instance Bojdecki et al. \cite{BoGo07, BoGo082}, Garz\'{o}n \cite{Ga09}, Shen et al. \cite{Shen16, Shen20}, Yan et al. \cite{YaWa14}, and the references therein.\\ 
In current paper, we consider the following semi-linear SDDE of the form 
 \begin{align}\label{s1}
 dx(t)&=\Big\{Ax(t)+f( x(t-\tau))\Big\}dt+\Big\{B x(t)+\sigma(x(t-\tau))\Big\}\delta^{a,b}B^{a,b}(t),~~~t\in[0,T],\nonumber\\
 {x}(t)&=\xi_0(t), \quad  \qquad t \in [-\tau,0],
 \end{align}
where $A$  is  a $d\times d$-matrix  and the stochastic integral is a type of Skorohod integral with respect to weighted fBm.  We recall some basic elements of this stochastic calculus in section 3. \\
%...................................................................\\
 In the case of weighted fractional Brownian motion, to the best of our knowlege, there is no paper which study the existence and smoothness of the density of the solution to linear SDDE (\ref{s1}). 
We are interested to consider Malliavin differentiability and smoothness of the density of the solution to SDDE (\ref{s1}) under the usual globally Lipschitz conditions on coefficient functions. 
\begin{assumption}\label{l548}
 Let $K_0$ is the largest integer number that $K_0\tau < T$ and $M$ is an integer number such that $K_0+1 < 2M$.
\begin{itemize}
\item  Functions $f$ and $\sigma$ are continuous and $2M$-times differentiable, whose their derivatives are bounded with some constant $K_2$. 
\item There exist some positive constants $L$ and $K_1$ such that for every $ x,y \in \mathbb{R}^d $ and $ t,s\in[0,T]$
 \begin{equation*}
 \begin{split}
|f(y_1)-f(y_2)|^2 +|\sigma(y)-\sigma(x)|^2  &\leq K_1\vert y_1-y_2\vert^2 ,  \qquad \qquad
|f(y)|^2 +|\sigma(x)|^2 \leq L\left(1+\vert y \vert^2 \right), \qquad 
\end{split}
\end{equation*}
\end{itemize}
\end{assumption}
In addition, to obtain smoothness of the density, we also assume that the function $\sigma$ has a lower bound greater than zero, confirming a H\"{o}rmander's type condition. \\

  Our main approach to prove these problems is inspired from \cite{FR06}. More precisely, we shall construct the solution of SDDE (\ref{s1}) step by step within the intervals $[i\tau, (i+1)\tau]$, for any integer $i=1,\cdots, K_0$, and then in the interval $[K_0\tau, T]$. We successively determine a comprehensive exposition for its Malliavin derivatives thruoghout the steps. This exposition will allow us to achieve some upper bounds for Malliavin derivatives of the solution.  \\
In order to obtain these results, the need for suitable upper bound inequality on supremum of stochastic Skorokhod integral generated by wfBm is essential. This inequality is a general type of $L^p$-maximal inequality in \cite{33} on stochastic integral with respect to wfBm (Theorem \ref{l0}).\\
The organization of the article is as follows: In sections 2 and 3, we recall some basic elements of wighted fractional Brownian motion and Malliavin calculus based on this type of stochastic integral. In section 4, we will prove a version of maximal inequality to achieve main results. The existence and uniqueness and Malliavin differentiability of the solution will be established in section 5. Uniformly boundedness of the moments of the solution and its Malliavin deivatives are investigated in section 6. Finally, we discuss the problem of regularity of the density in section  7.
%%%%%%%%%%%%%%%%%%%%%%%%%%%%%%%%%%%%%%%%%%%%%%%%%%%%%%
\section{Weighted fractional Brownian motion}
For all parameters $ a,b$ with $ a> -1,~~ |b|< 1 $ and $|b|< a+1$, a weighted fractional Brownian motion, denoted by $ B^{a,b}$, on the complete probability space $ (\Omega,\textit{F}, P)$  is a mean zero Guassian process with simple covariance function\cite{Shen16}:
\begin{equation*}
R^{a,b}(t,s)=E[B^{a,b}(t)B^{a,b}(s)]= \int_0^{s \wedge t}u^{a}[(t-u)^{b}+(s-u)^{b}]du,~~~ s,t\geq 0.
\end{equation*}
 This process is  $\frac{a+b+1}{2}$-self-similar, long-range dependence with H$ \ddot{o} $lder paths. It is neither a semimartingale nor a Markov process if $ b\neq 0$, and for some constant $c_{a,b}$ and  for any $s,t\geq 0$
\begin{equation}\label{bfbm}
c_{a,b}(t\vee s)^a|t-s|^{1+b}\leq E\left[\left(B^{a,b}(t)-B^{a,b}(s)\right)^2\right]\leq K_{a,b}(t\vee s)^a|t-s|^{1+b},
\end{equation}
 where $ K_{a,b}=\frac{b}{2\mathbb{B}(a+1,b+1)}$. Some surveys and references could be found in \cite{29, 2, Shen16}.\\
 When $-1<b<1$ and $a=0$, covariance of the process coincides particularlly with the covariance of the fractional Brownian motion with Hurst index 
 $\frac{b+1}{2}$ \cite{28};  
\begin{equation*}
E[B^{a,b}(t)B^{a,b}(s)]=\frac{1}{b+1}[t^{b+1}+s^{b+1}-|s-t|^{b+1}].
\end{equation*}
Let $H$ be the Hilbert space defined as the closure of the linear space $ \mathcal{E} $ of  indicator functions $\{\mathbf{1}_{[0,t]}, t\in[0,T]\}$  with respect to the inner product
\begin{equation*}
\left\langle \mathbf{1}_{[0,s]}\mathbf{1}_{[0,t]}\right\rangle _{H} =R^{a,b}(t,s).
\end{equation*}
Consider the Gaussian processes $B^{a,b}(u)$ on $H$ such that for every $u_1,u_2 \in H$, $\mathbb{E}(B^{a,b}(u_1)B^{a,b}(u_2)) = \left\langle u_1,u_2\right\rangle _{H}$. 
In \cite{Shen16} have been shown that 
$\textrm{H}=\{ u : [0,T]\rightarrow \mathbb{R}; \| u\| _{\textrm{H}}< \infty \}$,
where 
\begin{equation}\label{inner}
\| u\|^2_{\textrm{H}} =\int_{0}^{T}\int_0^T u(s)u(t)\phi(t,s)dt ds :=\int_{0}^{T}\int_0^T u(s)u(t)b(t\wedge s)^a (t\vee s-t\wedge s)^{b-1}dt ds,
\end{equation}
 The map $u  \in \mathcal{E} \rightarrow B^{a,b}(u)$ is an isometry from $ \mathcal{E} $ to the Gaussian space generated by $ B^{a,b}$ and it can be extended to $ \textrm{H}$.
We should note that The subspace $|\textrm{H}|$ of measurable functions $u$  equipped by the norm 
\begin{equation}\label{lk}
\| u\|^2_{|\textrm{H}|} =\int_0^T\int_0^T |u(s)||u(t)|\phi(t,s)dt ds<\infty,
\end{equation}
 is a Banach space and $\mathcal{E}$ is dense in $|\textrm{H}|$. Moreover, Pipiras and Taqqu \cite{32} have shown that as $a+b < 1$
\begin{equation}\label{sub1}
  L^2([0,T])\subset L^{2/(a+b+1)}\subset |\textrm{H}| \subset \textrm{H}.
\end{equation}
%%%%%%%%%%%%%%%%%%%%%%%%%%%%%%%%%%%%%%%%%%%%%5
\section{Preliminaries on Malliavin calculus}
We briefly recall some Malliavin criteria on fractional Brownian motion and wieghted fractional Brownian motion in \cite{2, Duncan, Shen16}.
When $b>0$, we denote by $ \mathcal{S} $  the set of smooth functionals of the form
 \begin{equation*}
F=f\left( B^{a,b}(u_{1}),B^{a,b}(u_{2}),\cdots B^{a,b}(u_{n})\right),
\end{equation*}
 where $ f \in C^{\infty}_{b} (\mathbb{R}^n)$ (f and all its derivatives are bounded) and $ u_{i} \in \textrm{H},~ i=1,2,\cdots ,n$. For every $F \in \mathcal{S}$, define
\begin{equation*}
D^{a,b}F=\sum_{i=0}^{n} \frac{\partial f}{\partial x_{i}}\left( B^{a,b}(u_{1}),B^{a,b}(u_{2}),\cdots B^{a,b}(u_{n})\right)u_{i}.
\end{equation*}
 The derivative operator $D^{a,b}$ is a closable operator from $L^{p}(\Omega)$ into $L^{p}(\Omega,\textrm{H})$ for every $p \geq 1$. We denote by $ \mathbb{D}^{1,p}$ the closure of $ \mathcal{S} $ respect to the norm
\begin{equation*}
\|F\|_{1,p}^p=E|F|^p+E\|D^{a,b}F\|^p_H.
\end{equation*}
For any $k \geq 1$, $k$-times iteration of  the derivative operator is expressed by $D^{a,b} \ldots D^{a,b}F$ ($k$-times). For given Hilbert space $V$, the corresponding sobolev space $ \mathbb{D}^{1,p}(V)$ is the domain of the derivative operator of $V$-valued random variable.\\
The adjoint of the derivative operator, called $ \delta^{a,b}( \varphi )$, is characterized by the duality relationship
\begin{equation*}
E\left[F\delta^{a,b}(\varphi)\right]=E\left\langle D^{a,b}F,\varphi\right\rangle_H,
\end{equation*}
for any $F \in \mathcal{S}$. 
For every stochastic process $ \varphi \in \mathbb{D}^{1,2}(|\textrm{H}|)$ 
indefinite Skorohod integral is expressed as
\begin{equation*}
\delta^{a,b}(\varphi)=\int _{0}^T \varphi(s)\delta^{a,b} B^{a,b}(s).
\end{equation*}
In a similar argument of \cite{Shen16, 33}
\begin{equation}\label{sdelta}
\int _0^T \varphi(s)dB^{a,b}(s)= \int _{0}^T \varphi(s)\delta^{a,b} B^{a,b}(s)+b\int_0^T\int_0^T D^{a,b}_t \varphi(s) (t\wedge s)^a (t\vee s-t\wedge s)^{b-1}dt ds,
\end{equation}
providing the second summand is finite. A classical relation is that, 
\begin{align}
\mathbb{E}\left[\vert\delta^{a,b}(\varphi)\vert^2\right]&= \mathbb{E}\|\varphi\|^{2}_{H}+ \mathbb{E}\int_{[0,T]^4}D^{a,b}_\xi \varphi(r)D^{a,b}(\eta) \varphi(s) \phi(\eta,r)\phi(\xi,s)ds dr d\xi d\eta \nonumber\\
&\leq\mathbb{E}\|\varphi\|^{2}_{|H|}+ \mathbb{E}\int_{[0,T]^4}|D^{a,b}_\xi \varphi(r)||D^{a,b}_\eta \varphi(s)||\phi(\eta,r)||\phi(\xi,s)|ds dr d\xi d\eta.  \label{tavan2}
\end{align}
For any $p> 1$ we denote by $ \mathbb{L}^{1,p}_{(a+b+1)/2}$ the set of processes $\varphi$ in $ \mathbb{D}^{1,2}(|\textrm{H}|)$ such that
\begin{equation*}
\| \varphi\|^p_{\mathbb{L}^{1,p}_{(a+b+1)/2}}= \mathbb{E}\| \varphi\|^p_{{L}^{2/(a+b+1)}([0,T])}+\mathbb{E}\|D^{a,b} \varphi\|^p_{{L}^{2/(a+b+1)}([0,T]^2)}<\infty.
\end{equation*}
In view of (\ref{sub1}) and (\ref{tavan2}), one can easily show that for every $\varphi \in \mathbb{L}_{(a+b+1)/2}^{1,p}$
 \begin{align}\label{02}
 \mathbb{E}\left[|\delta ^{a,b}(\varphi)|^p\right] \leq C_{(a+b+1)/2,p}\left(\|\mathbb{E}\varphi\|_{L^{2/(a+b+1)}([0,T])}^p+\mathbb{E}\|D^{a,b}\varphi\|_{L^{2/(a+b+1)}([0,T]^2)}^p\right)
 \end{align}
%%%%%%%%%%%%%%%%%%%%%%%%%%%%%%%%%%%%%%%%%%%%%%%%%%%%%%%
\section{$L^p$-maximum estimation of Skorokhod integral driven by weighted fractional integrals}
In this section, we shall establish $ L^p$-maximal estimation of the divergence process driven by weighted fractional Brownian motion for every $1<p_0<\infty$. To do this, we will apply some upper bound inequality  in \cite{34} to Hardy type operator $ T_{\alpha,\beta}$ of any function $f \in L^{p_0}\Big((a_1,b_1);v\Big)$ defined by
 \begin{align*}
 T_{\alpha,\beta}f(x)=\int_{a_1}^{x} \frac{u(s)W^{\beta}(s)f(s)w(s)ds}{\left(W(x)-W(s)\right)^{1-\alpha}}, \qquad x\in[0,T],
 \end{align*}
where $W$ is a non-negative, strictly increasing and locally absolutely continuous function on interval $I=(a_1,b_1)$  and $\frac{dW(s)}{ds}=w(s)$, and also $u$ is almost everywhere positive locally integrable function as well as $v$ is positive kernel function. \\
 Let $ \frac{1}{p_0}+\frac{1}{p'}=1$ and for every $p_0 \leq q <\infty $ denote 
\begin{equation*}
A_{\alpha,\beta}= \sup_{z \in I} \Big(\int_{a_1}^{z} u^{p'}(s) W^{p'\beta}(s)w(s)ds\Big)^{\frac{1}{p_0}}\Big(\int_{z}^{b_1} W^{q(\alpha-1)}(s)ds\Big)^{\frac1q}.
\end{equation*}
In Theorem 3.3 of \cite{34} have been shown that for $0 <\alpha <1$ and $\beta \geq 0$ there exists some constant $C$ such that  
 \begin{align}\label{01}
  \left(\int_{a_1}^{b_1}\left(T_{\alpha,\beta}f(x)\right)^{q}v(x)dx\right)^{\frac{1}{q}} \leq C \left(\int_{a_1}^{b_1}\left( f(x)\right)^{p_0}w(x)dx\right)^{\frac{1}{p_0}}.
 \end{align}
if and only if $A_{\alpha,\beta} < \infty$.
Now, we establish a $L^{p}$-maximal estimation for the indefinite integral $ \int_{0}^{t} \varphi(s)\delta^{a,b} B^{a,b}(s)$. 
 %%%%%%%%%%%%%%%%%%%%%%%%%%%%%%%%%%%%%%%%%%%%%%%%%%%%%%%%%%%%%%%%%%%%%%%%%%%%%%%%
 \begin{theorem}\label{l0}
For $ a>-1,~ |b|<1$ such that $a+b <1$, let $ u=\{u(t) , t\in [0,T]\}$  be a stochastic process in
$\mathbb{L}^{1,p}_{(a+b+1)/2}$. Then for every $p>\frac{4}{a+b+1}$, there exists some constant $C_1$ such that
\begin{align*}
\mathbb{E}\left(\sup_{t \in [0,T]}\left|\int_{0}^{t}u(s)\delta^{a,b} B^{a,b}_{s}\right|^{p} \right)
&\leq C_1 T \bigg\{\left(\int_{0}^{T}\left|\mathbb{E}u(s)\right|^{\frac{2p}{p(a+b+1)-2}} ds\right)^{\frac{p(a+b+1)-2}{2}}\\
 &+\mathbb{E}\left(\int_0^T\left(\int_0^T\left|D_{s}^{a,b}u(r)\right|^{2/(a+b+1)}dr\right)^{\frac{p(a+b+1)}{p(a+b+1)-2}}ds\right)^{\frac{p(a+b+1)-2}{2}}\bigg\},\\
\end{align*}
where the constant $ C_1 $ depends on $(a+b+1)/2$, $p$ and $ T$.
\end{theorem}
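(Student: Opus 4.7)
The strategy is to combine the pointwise moment estimate (\ref{02}) on Skorokhod integrals with the weighted Hardy-type inequality (\ref{01}), so as to push the supremum in $t$ inside the $L^p$-norms appearing on the right-hand side of (\ref{02}). This mirrors the approach used in \cite{33} for the classical fBm case, which is recovered from the statement by setting $a=0$.

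First I would apply estimate (\ref{02}) to the stopped integrand $u(s)\mathbf{1}_{[0,t]}(s)$ to obtain, for every fixed $t \in [0,T]$,
$$\mathbb{E}\left|\int_0^t u(s)\delta^{a,b} B^{a,b}_s\right|^p \leq C\left(\|\mathbb{E} u\|^p_{L^{2/(a+b+1)}([0,t])} + \mathbb{E}\|D^{a,b}u\|^p_{L^{2/(a+b+1)}([0,t]^2)}\right).$$
This is only a bound on $\sup_t \mathbb{E}|M_t|^p$, not on $\mathbb{E}\sup_t|M_t|^p$, where $M_t := \int_0^t u(s)\delta^{a,b}B^{a,b}_s$. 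To obtain the latter, I would next represent $M_t$ through a fractional Volterra factorization of the form $M_t = c_{a,b}\int_0^t (t-s)^{\alpha-1} s^{\beta} Y_s\, ds$, with $\alpha=(a+b+1)/2$ and $\beta$ chosen so that the kernel reproduces the covariance weight $\phi(t,s)=b(t\wedge s)^a(t\vee s-t\wedge s)^{b-1}$ of the wfBm; the auxiliary process $Y_s$ is itself a Skorokhod-type integral whose $p$-th moment can be controlled again by (\ref{02}).

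With this representation in hand, I would apply the Hardy operator bound (\ref{01}) with $W(s)=s$, $w\equiv 1$, $p_0=p$, and a sufficiently large $q$. This gives
$$\sup_{t\in[0,T]} |M_t|^{q} \leq C_q \int_0^T |Y_s|^{p}\, v(s)\, ds$$
(up to a Kolmogorov/Garsia–Rodemich–Rumsey passage from $L^q$ to $L^\infty$), provided the Muckenhoupt-type quantity $A_{\alpha,\beta}$ is finite. Tracing through, the condition $A_{\alpha,\beta}<\infty$ reduces to an elementary inequality on the exponents equivalent to $p>4/(a+b+1)$, which is precisely how that restriction on $p$ enters the statement. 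Taking expectation, using Fubini and Minkowski to swap $\mathbb{E}$ with the $L^{p}$-norm in $s$, and invoking (\ref{02}) pointwise on $\mathbb{E}|Y_s|^p$ produces the $L^{2p/(p(a+b+1)-2)}$-norm on $\mathbb{E} u$ and the iterated $L^{2/(a+b+1)}$-norm on $D^{a,b}u$ that appear in the target bound; the outer factor $T$ is gathered from a Jensen/Hölder step when we integrate a constant over $[0,T]$.

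The principal obstacle is the precise bookkeeping in the previous step: one must choose the Volterra exponents $(\alpha,\beta)$ so that simultaneously (i) the kernel reproduces $M_t$ up to a constant depending only on $(a,b)$, and (ii) the resulting Muckenhoupt-type condition yields exactly the exponent $\frac{2p}{p(a+b+1)-2}$ in the conclusion. Verifying the compatibility between the weight $\phi$ from (\ref{inner}) and the parameters $(\alpha,\beta,w,v)$ of the Hardy operator $T_{\alpha,\beta}$ in (\ref{01}), and then carrying the $L^{2/(a+b+1)}$-based Meyer inequality (\ref{02}) through the resulting expression, constitutes the technical heart of the argument.
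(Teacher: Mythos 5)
Your toolbox is the right one --- the Al\`os--Nualart-style fractional factorization of the indicator $\mathbf{1}_{[s,t]}$, the pointwise moment bound (\ref{02}), and the weighted Hardy-type inequality (\ref{01}) --- and this is indeed the skeleton of the paper's argument. But the step where you actually control the supremum over $t$ is the one place your outline does not hold together. After writing $M_t=c^{-1}\int_0^t t^{\lambda}(t-r)^{-a_0}Y_r\,dr$ with $Y_r=\int_0^r u(s)s^{a_0}(r-s)^{\lambda-1}\delta^{a,b}B^{a,b}(s)$ (stochastic Fubini), you propose to feed this through the Hardy operator to get an $L^q(v\,dt)$ bound on $M$ and then upgrade to $L^\infty$ ``up to a Kolmogorov/Garsia--Rodemich--Rumsey passage.'' That passage is neither justified (you never produce the increment estimates on $M_t-M_{t'}$ that GRR would require) nor needed: the actual mechanism is elementary. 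One applies H\"older's inequality directly to the Fubini representation to get $|M_t|^p\le c^{-1}\int_0^t|Y_r|^p\,dr\le c^{-1}\int_0^T|Y_r|^p\,dr$, a bound monotone in $t$, so the supremum is absorbed for free before any expectation is taken. As written, your route leaves the central claim of the theorem --- that the bound holds for $\mathbb{E}\sup_t|M_t|^p$ rather than $\sup_t\mathbb{E}|M_t|^p$ --- resting on an unproved regularity statement.

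Two secondary misplacements compound this. First, the Volterra exponents are not ``chosen so that the kernel reproduces the covariance weight $\phi$''; the factorization is a pure beta-function identity, and the covariance enters only through the constraint $\lambda>(a+b+1)/2$ needed so that $(r-s)^{\lambda-1}$ stays in $L^{2/(a+b+1)}$ and (\ref{02}) applies to $Y_r$. Second, the Hardy inequality (\ref{01}) is not applied to the stochastic process $Y$ with $p_0=p$; it is applied \emph{after} (\ref{02}), deterministically, to the weighted iterated integrals of $|\mathbb{E}u|^{2/a_0}$ and $|D^{a,b}u|^{2/a_0}$ (with $q=p(a+b+1)/2$, $p_0=pa_0/(pa_0-2)$, $\alpha=1+2(\lambda-1)/a_0$, $\beta=2$, $v=1/T$), and it is the finiteness of $A_{\alpha,\beta}$ there that forces $p>4/(a+b+1)$. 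Reordering your steps as H\"older, then (\ref{02}), then Hardy closes the gap and recovers the stated exponents.
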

%%%%%%%%%%%%%%%%%%%%%%%%%%%%%%%%%%%%%%%%%%%%%%%%%%%%%%%%%%%%%%%%%%%%%%%%%%%%%
\begin{proof} 
The proof is motivated by the proof of Theorem 4 in \cite{33}. Set a positive constant $\lambda$ such that 
$\frac{a+b+1}{2} < \lambda < 1$ and put $a_0= a+b+1$ which is a positive number. Now, using the equality
 \begin{align*}
c_{a,b,\lambda}=\int _{r}^{t} t^{\lambda}(t-\theta)^{-a_0}r^{a_0} (\theta-r)^{\lambda-1}d\theta<\infty,
\end{align*}
we know that 
\begin{align*}
\int _{0}^{t}u(s)\delta^{a,b} B^{a,b}(s)=c_{a,b,\lambda}^{-1}\int _{0}^{t} u(s) \left(\int_{s}^{t}t^{\lambda} (t-r)^{-a_0}s^{a_0}(r-s)^{\lambda-1}dr\right)\delta^{a,b} B^{a,b}(s).
\end{align*}
One can apply Fubini's stochastic theorem to result
\begin{align*}
\int _{0}^{t}u(s)\delta^{a,b} B^{a,b}(s)=c_{a,b}^{-1}\int _{0}^{t}t^{\lambda} (t-r)^{-a_0} \left(\int_{0}^{r} u(s)s^{a_0}(r-s)^{\lambda-1}\delta^{a,b} B^{a,b}(s)\right)dr.
\end{align*}
From H$ \ddot{o} $lder's inequality for some other constant c
\begin{align*}
\left|\int _{0}^{t}u(s)\delta^{a,b} B^{a,b}(s)\right|^p\leq  c^{{-1}}\int _{0}^{t}\left|\int_{0}^{r} u(s)s^{a_0}(r-s)^{\lambda-1}\delta^{a,b} B^{a,b}(s)\right|^p dr.
\end{align*}
Therefore,
\begin{align*}
\mathbb{E}\left(\sup_{t \in [0,T]}\left|\int _{0}^{t}u(s)\delta^{a,b} B^{a,b}(s)\right|^{p} \right)&\leq
c^{-1}\mathbb{E} \int _{0}^{T}\left|\int_{0}^{r} u(s)s^{a_0}(r-s)^{\lambda-1}\delta^{a,b} B^{a,b}(s)\right|^p dr.
\end{align*}
Using now inequality (\ref{02}) we obtain
\begin{align*}
\mathbb{E}\left(\sup_{t \in [0,T]}\left|\int _{0}^{t}u(s)\delta^{a,b} B^{a,b}(s)\right|^{p} \right)&\leq c^{-1}C_{\frac{a+b+1}{2}}\bigg\{ \int_{0}^{T}\left(\int_{0}^{r}s^{2}(r-s)^{2(\lambda-1)/a_0}\left|\mathbb{E}u(s)\right|^{2/a_0} ds\right)^{pa_0/2}dr \nonumber\\
&+ \mathbb{E} \int_0^T \left( \int _0^r \int_0^T s^{2}(r-s)^{2(\lambda-1)/a_0} \left|D^{a,b}_\theta u(s)\right|^{2/a_0}d\theta ds\right)^{pa_0/2}dr\bigg\}\nonumber\\
&=: c^{-1}C_{a_0/2}(I_{1}+I_{2}).\\
\end{align*}
To bound the terms  $I_{1}$ and $I_{2}$, we apply the Hardy type operator inequality (\ref{01}) for $q=p\frac{(a+b+1)}{2}$, $p_{0}=\frac{pa_0}{pa_0-2}$, $\alpha=1+2\frac{(\lambda-1)}{a_0}$, $\beta=2$, $u(.)=1$, $v(.)=\frac{1}{T}$ and $W(s)=s$. Under our assumptions and the way of choosing $\lambda$, can has  $1 < p_0 < q$ and $0 <\alpha <1$ as well as $A_{\alpha,\beta}< \infty$. Consequently,
\begin{align*}
I_{1}&\leq CT \left(\int_{0}^{T}\left|\mathbb{E}u(r)\right|^{\frac{2p_0}{a_0}} dr\right)^{\frac{q}{p_0}},
\end{align*}
\begin{align*}
I_{2}&\leq
 C \mathbb{E}\left(\int_0^T \left(\int_0^T \left |D_{s}^{a,b}u(r)\right|^{2/a_0}ds\right)^{p_0}dr\right)^{\frac{q}{p_0}},
\end{align*}
which completes the proof.
\end{proof}
According to the proof of this theorem, for any partition $0 \leq t_1 \cdots \leq t_N=T$ and every $0 \leq k \leq N$, the following corollary can be covered, if the weighted function $v$ in the proof of Theorem \ref{l0} replaced by $v=\frac{1}{t_{k+1}-t_k}$. 
\begin{corollary}\label{corr1}
Under conditions of Theorem \ref{l0}, for every $0 \leq k \leq N$ and any partition $0 \leq t_1 \cdots \leq t_N=T$, there exists some constant which we denote again $C_1$ such that 
\begin{align*}
\mathbb{E}\left(\sup_{t \in [t_k,t_{k=1}]}\left|\int_{t_k}^{t}u(s)\delta^{a,b} B^{a,b}_{s}\right|^{p} \right)
&\leq C_1 (t_{k+1}-t_k)\bigg\{\left(\int_{t_k}^{t_{k+1}}\left|\mathbb{E}u(s)\right|^{\frac{2p}{p(a+b+1)-2}} ds\right)^{\frac{p(a+b+1)-2}{2}}\\
 &+\mathbb{E}\left(\int_{t_k}^{t_{k+1}}\left(\int_{t_k}^{t_{k+1}}\left|D_{s}^{a,b}u(r)\right|^{2/(a+b+1)}dr\right)^{\frac{p(a+b+1)}{p(a+b+1)-2}}ds\right)^{\frac{p(a+b+1)-2}{2}}\bigg\},\\
\end{align*}
\end{corollary}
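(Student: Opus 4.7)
The plan is to retrace the proof of Theorem \ref{l0} line by line, with the interval $[0,T]$ replaced by the subinterval $[t_k, t_{k+1}]$, and with the weight $v$ in the Hardy-type operator inequality (\ref{01}) replaced by the constant $v \equiv 1/(t_{k+1}-t_k)$. The key observation driving the whole argument is that the factor $T$ appearing in front of $\{I_1+I_2\}$ in the theorem arose precisely as $1/v$ when $v=1/T$, so this substitution will automatically produce the desired factor $(t_{k+1}-t_k)$ in place of $T$.

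Concretely, I first fix $\lambda$ with $a_0/2 < \lambda < 1$ where $a_0=a+b+1$, use the same identity $c_{a,b,\lambda} = \int_r^t t^\lambda (t-\theta)^{-a_0} r^{a_0}(\theta-r)^{\lambda-1}\,d\theta$, and apply Fubini on $[t_k,t_{k+1}]$ to rewrite
\[
\int_{t_k}^{t} u(s)\,\delta^{a,b} B^{a,b}(s) \;=\; c_{a,b,\lambda}^{-1}\int_{t_k}^{t} t^{\lambda}(t-r)^{-a_0}\!\left(\int_{t_k}^{r} u(s)\, s^{a_0}(r-s)^{\lambda-1}\delta^{a,b} B^{a,b}(s)\right)\!dr.
\]
Then I apply H\"older's inequality to pass the $p$-th power inside the outer $dr$ integral, take expectation, and invoke inequality (\ref{02}) on the inner Skorokhod integral. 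This yields analogues $\tilde I_1,\tilde I_2$ of the two terms $I_1,I_2$ from the theorem, now with all inner and outer integrals restricted to $[t_k,t_{k+1}]$.

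Next I apply (\ref{01}) with exactly the same parameters chosen in the proof of Theorem \ref{l0}, namely $q=pa_0/2$, $p_0=pa_0/(pa_0-2)$, $\alpha=1+2(\lambda-1)/a_0$, $\beta=2$, $u(\cdot)=1$, $W(s)=s$, but now on the interval $(t_k,t_{k+1})$ and with constant weight $v(x)=1/(t_{k+1}-t_k)$. Inverting this weight on the left-hand side of the Hardy inequality then puts the factor $(t_{k+1}-t_k)$ in front of both $\tilde I_1$ and $\tilde I_2$, giving precisely the stated bound.

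The main technical point to verify is that the Hardy constant $A_{\alpha,\beta}$ associated to the subinterval $(t_k,t_{k+1})$ remains uniformly bounded in $k$, so that a single constant $C_1$ depending only on $a,b,p,T$ applies for every subinterval. Since $W(s)=s$ is monotone increasing and $t_{k+1}\le T$, both $\int_{t_k}^{z} s^{p'\beta} ds$ and $\int_{z}^{t_{k+1}} s^{q(\alpha-1)} ds$ are dominated by the corresponding integrals over $(0,T)$ (recall $\alpha-1<0$ and $\lambda$ is chosen close enough to $1$ so that $q(\alpha-1)>-1$ as in the proof of Theorem \ref{l0}). Hence $A_{\alpha,\beta}$ on any subinterval is controlled by its value on $(0,T)$, which delivers the uniform constant $C_1$ claimed in the corollary.
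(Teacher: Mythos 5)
Your proposal is correct and takes essentially the same approach as the paper, which justifies the corollary in one line by replacing the weight function $v$ in the proof of Theorem \ref{l0} with $v=\frac{1}{t_{k+1}-t_k}$. Your extra check that the Hardy constant $A_{\alpha,\beta}$ on each subinterval is dominated by its value on $(0,T)$, so a single constant works for all $k$, is a detail the paper leaves implicit.
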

\begin{corollary}\label{l54} When the function $ u(.)$  is a deterministic function, it is obviously that $ D_{r}u(s)=0$ and therefore for every $p > 4/(a+b+1)$
\begin{equation}\label{1co}
\begin{split}
\mathbb{E}\left(\sup_{t \in [0,T]}\left|\int _{0}^{t}u(s)\delta^{a,d}B^{a,b}(s)\right|^{p}\right) &\leq C_1T \left(\int_{0}^{T}\left|u(s)\right|^{\frac{2p}{p(a+b+1)-2}} ds\right)^{\frac{p(a+b+1)-2}{2}},
\end{split}
\end{equation}
and using Young's inequality implies that
\begin{equation*}
\mathbb{E}\left(\sup_{t \in [0,T]}\left|\int _{0}^{t}u(s)\delta^{a,b}B^{a,b}(s)\right|^{p}\right) \leq C_1T^2 \left(\int_{0}^{T}\left|u(s)\right|^{p}ds\right).
\end{equation*}
\end{corollary}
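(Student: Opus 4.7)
The plan is to derive both inequalities as essentially free consequences of Theorem \ref{l0}, using only the fact that deterministic integrands have vanishing Malliavin derivative, plus a standard Hölder step on a finite interval.

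For the first inequality (\ref{1co}), I would simply invoke Theorem \ref{l0} verbatim. Since $u$ is deterministic, $\mathbb{E}u(s)=u(s)$ and $D^{a,b}_{s}u(r)\equiv 0$ in $L^{2}(\Omega)$, so the second summand in the bound of Theorem \ref{l0} disappears and the first reduces to exactly
\[
C_{1}T\left(\int_{0}^{T}|u(s)|^{\frac{2p}{p(a+b+1)-2}}ds\right)^{\frac{p(a+b+1)-2}{2}}.
\]
No calculation beyond substitution is needed, provided one notes that the hypothesis $p>4/(a+b+1)$ is precisely the admissibility condition required by Theorem \ref{l0}.

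For the second inequality I would pass from the $L^{r}$-norm with $r=\frac{2p}{p(a+b+1)-2}$ to the $L^{p}$-norm by Hölder's inequality on the finite interval $[0,T]$ (this is what the author calls ``Young's inequality''). Writing $\alpha=p/r=(p(a+b+1)-2)/2$, the hypothesis $p>4/(a+b+1)$ gives $\alpha>1$, equivalently $r<p$, so Hölder with conjugate exponents $\alpha$ and $\alpha/(\alpha-1)$ yields
\[
\int_{0}^{T}|u(s)|^{r}ds\le T^{1-1/\alpha}\left(\int_{0}^{T}|u(s)|^{p}ds\right)^{1/\alpha}.
\]
Raising to the $\alpha$-th power and plugging into (\ref{1co}) produces the bound $C_{1}T^{\alpha}\int_{0}^{T}|u(s)|^{p}ds$. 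The power $T^{\alpha}$ is then absorbed together with $C_{1}$ into a single constant, written in the statement as $C_{1}T^{2}$ (so that $C_{1}$ is understood to depend on $p$, $a$, $b$ and $T$).

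There is no genuine obstacle: this corollary is a bookkeeping consequence of Theorem \ref{l0}. The only care required is tracking the exponent condition so that the Hölder step is legitimate, i.e. verifying once that $p>4/(a+b+1)$ implies $\alpha>1$, and observing that the $T$-dependent factors from the two steps combine into one constant.
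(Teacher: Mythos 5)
Your proposal is correct and matches the paper's (implicit) argument: the first bound is Theorem \ref{l0} with $\mathbb{E}u=u$ and $D^{a,b}u\equiv 0$, and the second follows from the H\"older/Jensen step on $[0,T]$, which is valid precisely because $p>4/(a+b+1)$ forces $r=\tfrac{2p}{p(a+b+1)-2}<p$. Your observation that the resulting factor $T^{\alpha}$ must be absorbed into the constant (rather than being literally $T^{2}$) is an accurate and harmless reading of the statement, since $C_{1}$ is already allowed to depend on $T$.
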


%%%%%%%%%%%%%%%%%%%%%%%%%%%%%%%%%%%%%%%%%%%%%%
\section{Existence and Malliavin differentiability of the solution}
As an analythic level, fisrt we show the existence of the solution of Equation (\ref{s1}) step by step in the intervals $[i\tau, (i+1)\tau]$ for any integer $i=1,\cdots, K_0$, and then in the interval $[K_0\tau, T]$. Malliavin differentiability of the solution in the sense of stochastic Skorokhod integral will be concluded, recursively. The uniqueness of the solution can be proved throughout any step.\\
We assume the following conditions on the function $\xi_0(.)$ and also recall two proposition from \cite{Duncan} for future use. 
\begin{assumption}\label{assum2}
The measurable function  $\xi_0(.)$ is Malliavin Differentiable up to the order $2M+1$ and for every $p \geq 1$, there exists some constant $C_{\xi,p}$ such that
\begin{equation}
\mathbb{E}\Big(\sup_{0 \leq r \leq \tau}\xi_0(r-\tau)^{p}\Big) \leq C_{\xi,p} < \infty,
\end{equation} 
\end{assumption}
 Let us recall the Ito's formula has been introduced in \cite{Duncan} which is essential in our modification. Proceeding the proof of Ito's formula therein show that it would be also hold in weighted stochastic integral with  inner product $<.,.>_H$ defined in (\ref{inner}) with the kernel function $\phi(.,.)$, as the authors have mentioned in their paper.  So, we shall rewrite Ito's formula for wfBm instead of fBm. Recall $\mathcal{L}(0,T)$ as the set of Malliavin differentiable stochastic processes $G$ such that 
 $\mathbb{E}\| G\|_{H}+\mathbb{E}\|D^{a,b} G\|_{H\otimes H}<\infty$, and for any sequence of partiotion $\pi:0=t^n_0 \leq \cdots \leq t_n^n=T$ of $[0,T]$ such that $\vert \pi \vert \rightarrow 0$ as $n \rightarrow \infty$
\begin{equation*}
\sum_{i=0}^n \mathbb{E}\Big\{ \int_{t_{i}^n}^{t_{i+1}^n}\int_0^T( D_{r}^{a,b}G^\pi_{t_i^n} -D_{r}^{a,b} G_s)\phi(r,s) dr ds\Big\}^2+ \mathbb{E}\Big\{ \| G^\pi - G \|_H \Big\} < \infty
\end{equation*}
\begin{proposition}(Duncan \cite{Duncan}, Theorem 4.3)\label{ito}
Let $\{F_s, s \in [0,T]\} \in \mathcal{L}(0,T)$ be a stochastic process such that for some $\alpha_3 > 1-\frac{b+1}{2}$, 
$$\mathbb{E}\Big(\vert F_{s_1}- F_{s_2} \vert^2\Big) \leq c_{F} \vert s_1- s_2\vert^{2\alpha_3}$$  
where $\vert s_1-s_2 \vert \leq \delta$ for some $\delta >0$ and 
$$\lim_{\vert s_1-s_2\vert \rightarrow 0}\mathbb{E}\Big(  \vert (D_{s_1}^{a,b})^\phi ( F_{s_1}- F_{s_2}) \vert^2\Big)=0.$$  
in which $(D_{s}^{a,b})^\phi  F = \int_0^T \int_0^T D_{r}^{a,b} F .\phi(r,s)dr $.
Assum that $\mathbb{E}(\sup_{0 \leq s \leq T} \vert G_s \vert) < \infty$ and $f:\mathbb{R}_{+}\times \mathbb{R} \longrightarrow \mathbb{R}$
 is a function in 
$\mathbb{C}_{b}^{1,2}(\mathbb{R}_{+}\times\mathbb{R})$.
 If $\eta_t = \xi +\int_0^t G_s ds + \int_0^t F_s \delta^{a,b}B_s^{a,b}, ~ \xi \in \mathbb{R}$ for $t \in [0,T]$ and $\frac{\partial f}{\partial x} (s,\eta_s)F_s \in \mathcal{L}(0,T)$. Then for $t \in [0,T]$
\begin{align*}
f(t, \eta_t)&=f(0,\xi)+ \int_0^t \frac{\partial f}{\partial t} (s,\eta_s) ds + \sum_{k=1}^{n}\int_0^t \frac{\partial f}{\partial x} (s,\eta_s) G_s ds\\
& +\int_0^t \frac{\partial f}{\partial x} (s,\eta_s)F_s  \delta^{a,b}B_s^{a,b} + \int_0^t \frac{\partial^2 f}{\partial x^2} (s,\eta_s)F_s (D_s^{a,b})^\phi \eta_s ds, 
\end{align*}
\end{proposition}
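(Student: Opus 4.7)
The plan is to prove the formula by a Riemann-sum approximation along a refining partition. Fix $\pi: 0 = t_0 < \cdots < t_n = t$ with $|\pi| \to 0$ and write the telescoping identity $f(t, \eta_t) - f(0, \xi) = \sum_{i} [f(t_{i+1}, \eta_{t_{i+1}}) - f(t_i, \eta_{t_i})]$. I would split each summand by first advancing the time variable, then the state. The time-advancement part, $\sum_i [f(t_{i+1}, \eta_{t_i}) - f(t_i, \eta_{t_i})]$, converges to $\int_0^t \partial_t f(s, \eta_s)\,ds$ by boundedness of $\partial_t f$ and continuity of $\eta$.

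For the state-advancement part $f(t_{i+1}, \eta_{t_{i+1}}) - f(t_{i+1}, \eta_{t_i})$, I would apply a second-order Taylor expansion. Substituting $\eta_{t_{i+1}} - \eta_{t_i} = \int_{t_i}^{t_{i+1}} G_s\,ds + \int_{t_i}^{t_{i+1}} F_s\,\delta^{a,b} B_s^{a,b}$ into the linear piece, the drift portion yields $\int_0^t \partial_x f(s, \eta_s) G_s\,ds$ by dominated convergence. For the divergence portion I would use the Skorokhod product rule $u\,\delta^{a,b}(v) = \delta^{a,b}(uv) + \int_0^T\int_0^T D_r^{a,b}u\cdot v(s)\phi(r,s)\,dr\,ds$ to bring the nearly-constant factor $\partial_x f(t_{i+1}, \eta_{t_i})$ inside the local divergence; the principal part assembles to $\int_0^t \partial_x f(s, \eta_s) F_s\,\delta^{a,b} B_s^{a,b}$, while the boundary correction generated by the Malliavin-derivative piece will merge with the quadratic contribution below.

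For the quadratic remainder $\sum_i \tfrac12 \partial_x^2 f(t_{i+1}, \tilde\eta_i) (\eta_{t_{i+1}} - \eta_{t_i})^2$, the drift-drift and drift-stochastic cross terms vanish as $|\pi| \to 0$ by Cauchy-Schwarz and the $L^2$-bounds on $G$ and the stochastic increment. The stochastic-stochastic piece is the essential term: by the isometry-type identity (\ref{tavan2}), the expected square of $\int_{t_i}^{t_{i+1}} F_s\,\delta^{a,b} B_s^{a,b}$ equals a double integral of $F\otimes F$ against $\phi$ plus a Malliavin-derivative trace. Using the Hölder assumption on $F$ with exponent $\alpha_3 > 1 - \tfrac{b+1}{2}$ together with the pointwise behavior $\phi(r,s) \sim (r \wedge s)^a |r - s|^{b-1}$, the off-diagonal contribution localizes on the diagonal and the Riemann sum converges to $\int_0^t \partial_x^2 f(s, \eta_s) F_s (D_s^{a,b})^\phi \eta_s\,ds$, the identification coming from recognizing $(D_s^{a,b})^\phi \eta_s = \int_0^T D_r^{a,b}\eta_s\,\phi(r,s)\,dr$ as the Malliavin-weighted quadratic-variation trace of the wfBm-driven component of $\eta$.

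The main obstacle is the rigorous convergence of this quadratic stochastic piece; one must show that the off-diagonal part of $\int_{t_i}^{t_{i+1}}\int_{t_i}^{t_{i+1}} F_r F_s \phi(r,s)\,dr\,ds$ is $o(t_{i+1} - t_i)$ uniformly, which requires carefully balancing the Hölder exponent against the singularity of $\phi$, i.e., precisely the hypothesis $\alpha_3 > 1 - (b+1)/2$. The second hypothesis $\lim_{|s_1-s_2|\to 0}\mathbb{E}|(D_{s_1}^{a,b})^\phi (F_{s_1}-F_{s_2})|^2 = 0$ serves to dispose of the corresponding Malliavin-derivative off-diagonal terms generated when $\partial_x^2 f$ is frozen at partition points. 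Once both limits are established, a standard density argument extending from $f \in C_b^\infty$ to $f \in C_b^{1,2}$ closes the proof, with the requirement $\eta \in \mathcal{L}(0,T)$ and $(\partial_x f)(s,\eta_s) F_s \in \mathcal{L}(0,T)$ ensuring that the trace $(D_s^{a,b})^\phi \eta_s$ and the Skorokhod integral in the conclusion are well defined.
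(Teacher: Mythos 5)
The paper never proves this proposition: it is imported verbatim (up to replacing the fBm kernel by $\phi(r,s)=b(r\wedge s)^a(r\vee s-r\wedge s)^{b-1}$) from Duncan--Hu--Pasik-Duncan, Theorem 4.3, with only the remark that the argument there carries over to the weighted kernel. Your partition-plus-Taylor skeleton is indeed the strategy of that cited proof, and the first half of your outline --- the time increment, the drift increment, and the use of the product rule $u\,\delta^{a,b}(v)=\delta^{a,b}(uv)+\langle D^{a,b}u,v\rangle_H$ to extract the principal divergence term --- is sound.

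The genuine gap is in where the second-order term comes from. You assign $\int_0^t \partial_x^2 f(s,\eta_s)F_s(D_s^{a,b})^\phi\eta_s\,ds$ to the quadratic Taylor remainder, reading $(D_s^{a,b})^\phi\eta_s$ as a ``quadratic-variation trace'' obtained by localizing $\int\!\!\int F_rF_s\,\phi(r,s)\,dr\,ds$ on the diagonal. This cannot work. First, since $b>0$, the bound (\ref{bfbm}) gives $\int_{t_i}^{t_{i+1}}\!\int_{t_i}^{t_{i+1}}\phi(r,s)\,dr\,ds=O\big((t_{i+1}-t_i)^{1+b}\big)$, so the stochastic--stochastic part of the quadratic remainder is $O(|\pi|^{b})$ in $L^1$ and vanishes entirely --- the process has no quadratic variation, which is exactly why this It\^o formula has a trace term instead of a $\tfrac12\partial_x^2f\,F^2$ term. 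Second, $(D_s^{a,b})^\phi\eta_s=\int_0^T D_r^{a,b}\eta_s\,\phi(r,s)\,dr$ integrates $D_r^{a,b}\eta_s$ over all $r$ against a kernel that is merely integrable at the diagonal, not concentrating on it; it is not a diagonal localization of $F\otimes F$. Third, even if your mechanism produced the term it would carry the Taylor coefficient $\tfrac12$, whereas the formula requires coefficient $1$. In the argument the paper is invoking, the entire second-order term comes from the Malliavin-trace correction of the product rule applied to the \emph{first}-order increment: $D_r^{a,b}\big(\partial_xf(t_i,\eta_{t_i})\big)=\partial_x^2f(t_i,\eta_{t_i})\,D_r^{a,b}\eta_{t_i}$, so each correction equals $\partial_x^2f(t_i,\eta_{t_i})\int_{t_i}^{t_{i+1}}(D_s^{a,b})^\phi\eta_{t_i}\,F_s\,ds$, and it is in passing to the limit of these Riemann sums that the H\"older hypothesis on $F$ and the continuity of $(D^{a,b})^\phi F$ are actually used. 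Your remark that the product-rule correction ``will merge with the quadratic contribution'' does not repair this: if both contributions survived as you describe, you would obtain $\tfrac32$ times the correct term.
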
 
\begin{proposition}(Duncan \cite{Duncan}, Theorem 4.6)\label{ito1}
Let $\{F^i_s, s \in [0,T]\} \in \mathcal{L}(0,T)$ and the function $f$ satisfy the conditions of Proposition \ref{ito}. Assume
$\eta^k_t = \xi_k +\int_0^t G^k_s ds + \int_0^t F^k_s \delta^{a,b}B_s^{a,b}, ~ \xi \in \mathbb{R^n}$ for $t \in [0,T]$ and $\frac{\partial f}{\partial x_k} (s,\eta_s)F^k_s \in \mathcal{L}(0,T)$. Then for $t \in [0,T]$
\begin{align*}
f(t, \eta^1_t, \cdots,  \eta^n_t)&=f(0,\xi_1, \cdots, \xi_n)+ \int_0^t \frac{\partial f}{\partial t} (s,\eta_s) ds + \sum_{k=1}^{n}\int_0^t \frac{\partial f}{\partial x_k} (s,\eta_s) G^k_s ds\\
& +\sum_{k=1}^{n}\int_0^t \frac{\partial f}{\partial x_k} (s,\eta_s)F^k_s  \delta^{a,b}B_s^{a,b} +
\sum_{k,l=1}^{n}\int_0^t \frac{\partial^2 f}{\partial x_k \partial x_l} (s,\eta_s)F^k_s (D_s^{a,b})^\phi \eta^l_s ds,
\end{align*}
\end{proposition}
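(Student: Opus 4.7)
The plan is to bootstrap Proposition \ref{ito} to the multi-dimensional setting by a discretization argument, handling the coordinate mixing through the multivariate Taylor expansion. I would fix a partition $\pi:0=t_0<t_1<\cdots<t_N=t$ of $[0,t]$ with mesh $|\pi|\to 0$, write the increment $f(t,\eta_t^1,\ldots,\eta_t^n)-f(0,\xi_1,\ldots,\xi_n)$ as a telescoping sum, and expand each summand by a second-order Taylor expansion in the spatial variables (and first order in time). This produces four types of contributions on each subinterval: a time-derivative piece, a sum $\sum_{k}\partial_{x_k}f\cdot(\eta_{t_{i+1}}^k-\eta_{t_i}^k)$, a quadratic form $\tfrac12\sum_{k,l}\partial^2_{x_kx_l}f\cdot(\eta_{t_{i+1}}^k-\eta_{t_i}^k)(\eta_{t_{i+1}}^l-\eta_{t_i}^l)$, and a Taylor remainder controlled by the $C_b^{1,2}$-bounds on $f$.

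Next I would substitute $\eta_{t_{i+1}}^k-\eta_{t_i}^k=\int_{t_i}^{t_{i+1}}G_s^k\,ds+\int_{t_i}^{t_{i+1}}F_s^k\,\delta^{a,b}B_s^{a,b}$ into both the first- and second-order pieces. The first-order drift pieces converge by standard Riemann-sum arguments to $\sum_k\int_0^t\partial_{x_k}f(s,\eta_s)G_s^k\,ds$, and the first-order Skorokhod pieces converge to $\sum_k\int_0^t\partial_{x_k}f(s,\eta_s)F_s^k\,\delta^{a,b}B_s^{a,b}$, using the $\mathbb{L}^{1,p}_{(a+b+1)/2}$-integrability of $\partial_{x_k}f(s,\eta_s)F_s^k$ together with the maximal bound in Theorem \ref{l0} (in its partition form, Corollary \ref{corr1}) to control the approximation error of replacing $\partial_{x_k}f(t_i,\eta_{t_i})$ by $\partial_{x_k}f(s,\eta_s)$ inside the integrand.

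The main work is in the quadratic term. Of the nine bilinear products (drift--drift, drift--noise, noise--noise), only the noise--noise ones contribute in the limit. For those I would expand the product of Skorokhod increments via the isometry-type formula \eqref{tavan2} applied to the weighted kernel $\phi(r,s)$ defined in \eqref{inner}; the expectation of the orthogonal part vanishes in $L^2$ after summation over $\pi$, while the trace part produces precisely the kernel integral $\int_0^T\int_0^T D_r^{a,b}(\cdot)\,\phi(r,s)\,dr\,ds$ that matches the operator $(D_s^{a,b})^\phi$. Using the chain rule for $D^{a,b}$ and the process $\eta^l$, the resulting Riemann sum converges to $\sum_{k,l}\int_0^t\partial^2_{x_kx_l}f(s,\eta_s)F_s^k(D_s^{a,b})^\phi\eta_s^l\,ds$. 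The drift--drift and drift--noise mixed contributions are of order $o(|\pi|)$ after summation, thanks to the $L^2$-Hölder regularity built into the definition of $\mathcal{L}(0,T)$ and the embeddings \eqref{sub1}.

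The main obstacle will be the third step, specifically the uniform-in-$\pi$ control of the noise--noise sum. Two approximations must be handled simultaneously: replacing the frozen $\partial^2_{x_kx_l}f(t_i,\eta_{t_i})$ by $\partial^2_{x_kx_l}f(s,\eta_s)$ under the iterated Skorokhod integrals, and showing that the cross-kernel term produced by \eqref{tavan2} genuinely converges to the $(D_s^{a,b})^\phi\eta_s^l$ integral rather than to a spurious boundary contribution. Both are achievable by the same kernel-moment integrability assumed in the definition of $\mathcal{L}(0,T)$ (in particular the vanishing $\int\int(D_r^{a,b}G^\pi_{t_i}-D_r^{a,b}G_s)\phi(r,s)\,dr\,ds$ condition recalled just before Proposition \ref{ito}), combined with the $L^p$-maximal bound of Theorem \ref{l0}. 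Once these limits are secured, the multi-dimensional formula follows by the very same localization/density argument Duncan uses in the one-dimensional case, with scalar coefficients replaced by vectors throughout.
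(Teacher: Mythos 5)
The paper does not prove this proposition at all: it is quoted verbatim from Duncan, Hu and Pasik-Duncan (Theorem 4.6 of \cite{Duncan}), with only the remark that the argument there carries over once the fBm kernel is replaced by the weighted kernel $\phi(t,s)=b(t\wedge s)^a(t\vee s-t\wedge s)^{b-1}$ of (\ref{inner}). Your telescoping-plus-Taylor strategy is indeed the skeleton of Duncan's original proof, so the overall route is reasonable; the problem is where you locate the second-order correction term, and there your sketch is genuinely wrong.

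You claim the term $\sum_{k,l}\int_0^t\partial^2_{x_kx_l}f(s,\eta_s)F^k_s(D_s^{a,b})^\phi\eta^l_s\,ds$ emerges from the noise--noise part of the quadratic Taylor term via the isometry (\ref{tavan2}). It cannot: (\ref{tavan2}) is a second-moment identity, not a pathwise product formula, and in any case the noise--noise quadratic contribution vanishes in the limit, since by (\ref{bfbm}) the increments are of order $|t_{i+1}-t_i|^{(1+b)/2}$ in $L^2$ and $b>0$, so the summed quadratic variation is $O(|\pi|^{b})\to 0$ --- exactly as for fBm with Hurst index above $1/2$. Note also that $(D_s^{a,b})^\phi\eta^l_s=\int_0^T D_r^{a,b}\eta^l_s\,\phi(r,s)\,dr$ is a \emph{global} trace over the whole past of $\eta^l$, which no local quadratic-variation computation on $[t_i,t_{i+1}]^2$ could produce. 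The correction actually comes from the first-order Skorokhod piece: to pass from $\partial_{x_k}f(t_i,\eta_{t_i})\cdot\delta^{a,b}(F^k\mathbf{1}_{[t_i,t_{i+1}]})$ to $\delta^{a,b}\bigl(\partial_{x_k}f(t_i,\eta_{t_i})F^k\mathbf{1}_{[t_i,t_{i+1}]}\bigr)$ one must use the duality rule $G\,\delta^{a,b}(u)=\delta^{a,b}(Gu)+\langle D^{a,b}G,u\rangle_H$, and the inner-product remainder equals, by the chain rule, $\int_{t_i}^{t_{i+1}}\sum_l\partial^2_{x_kx_l}f(t_i,\eta_{t_i})\,(D_s^{a,b})^\phi\eta^l_{t_i}\,F^k_s\,ds$, whose Riemann sum converges to the asserted correction. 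Your proposal instead files this discrepancy under ``approximation error of replacing $\partial_{x_k}f(t_i,\eta_{t_i})$ by $\partial_{x_k}f(s,\eta_s)$'' to be killed by the maximal inequality; if that bookkeeping were carried out as written, the correction term would disappear from the final formula. The rest of the sketch (convergence of the drift Riemann sums, vanishing of the drift--drift and drift--noise cross terms, localization) is fine.
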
 
It is worth mentioning that, Theorem 4.2. in \cite{Duncan} can be also rewritten in the sense of weighted fractional Brownian motion, clearly.
\begin{proposition}\label{dfphi}
If $\Big\{F_s, s \in [0,T]\Big\} \in \mathcal{L}(0,T)$ and $\sup_{0 \leq s \leq T} \mathbb{E}\Big(\vert(D_s^{a,b})^\phi F_s \vert^2\Big) < \infty$. Then for $s,t \in [0,T]$
\begin{equation*}
(D_s^{a,b})^\phi \left\{\int_0^t F_u \delta^{a,b}B_u^{a,b}\right\} =\int_0^t   (D_s^{a,b})^\phi  F_u \delta^{a,b}B_u^{a,b}+ \int_0^t F_u \phi(s,u) du, \quad a.s.   
\end{equation*}
\end{proposition}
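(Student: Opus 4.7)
The plan is to first establish the formula for a dense class of smooth elementary processes and then extend to the full class $\mathcal{L}(0,T)$ by an approximation argument. The underlying identity is the classical Malliavin commutation rule between the Malliavin derivative and the Skorokhod integral, namely $D^{a,b}_r \delta^{a,b}(u) = u(r) + \delta^{a,b}(D^{a,b}_r u)$, together with a stochastic Fubini theorem to move the $\phi$-weighted integration inside the divergence operator.

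First I would take $F_u$ of the form $\sum_{i=1}^{N} F_i \mathbf{1}_{(t_i,t_{i+1}]}(u)$, with $F_i$ smooth functionals in $\mathcal{S}$, and apply the derivative operator $D^{a,b}_r$ to $\delta^{a,b}(F\mathbf{1}_{[0,t]})$. Using the standard commutation identity, this yields
\begin{equation*}
D^{a,b}_r \left(\int_0^t F_u \delta^{a,b} B_u^{a,b}\right) = F_r \mathbf{1}_{[0,t]}(r) + \int_0^t D^{a,b}_r F_u \, \delta^{a,b} B_u^{a,b}.
\end{equation*}
Multiplying by $\phi(r,s)$, integrating over $r \in [0,T]$, and recalling that $(D^{a,b}_s)^\phi F = \int_0^T D^{a,b}_r F \, \phi(r,s)\, dr$, the deterministic piece produces exactly $\int_0^t F_u \phi(s,u)\, du$ (after using symmetry of $\phi$). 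The stochastic piece becomes
\begin{equation*}
\int_0^T \phi(r,s) \int_0^t D^{a,b}_r F_u \, \delta^{a,b} B_u^{a,b}\, dr,
\end{equation*}
which, for the elementary $F$, can be rewritten via a stochastic Fubini argument as $\int_0^t (D^{a,b}_s)^\phi F_u \, \delta^{a,b} B_u^{a,b}$. This gives the asserted identity on the elementary class.

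Next I would extend to $F \in \mathcal{L}(0,T)$ with $\sup_{s}\mathbb{E}|(D^{a,b}_s)^\phi F_s|^2 < \infty$. By definition of $\mathcal{L}(0,T)$ there is a sequence of step processes $F^\pi$ converging to $F$ in the $\mathcal{L}(0,T)$-norm, hence $\int_0^t F^\pi_u\, \delta^{a,b} B^{a,b}_u \to \int_0^t F_u\, \delta^{a,b} B^{a,b}_u$ in $L^2(\Omega)$, and likewise for $(D^{a,b}_s)^\phi F^\pi_u$ in place of $F^\pi_u$ by the assumed uniform bound. Closedness of the Malliavin derivative, combined with continuity of the deterministic integral $\int_0^t F^\pi_u \phi(s,u)\, du$, allows one to pass to the limit in each term of the identity established on the elementary class and obtain the claim almost surely.

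The main obstacle I expect is justifying the stochastic Fubini step, i.e.\ commuting the divergence $\delta^{a,b}$ with the $dr$-integration against $\phi(r,s)$. Since $\phi$ has an integrable singularity on the diagonal (through the factor $(t\vee s-t\wedge s)^{b-1}$), one must check that the process $r \mapsto D^{a,b}_r F_u \phi(r,s)$ is in the right space $\mathbb{D}^{1,2}(|\mathrm{H}|)$ uniformly in $s$, and that the interchange is legitimate. For elementary $F$ this is straightforward; for the limiting process the bound $\sup_s \mathbb{E}|(D^{a,b}_s)^\phi F_s|^2 < \infty$ is exactly what is required to control the Skorokhod norm of the integrand after the swap. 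The remaining verifications (continuity of the deterministic $\phi$-integral and closedness of the derivative) are routine once this interchange is in hand.
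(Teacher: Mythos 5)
Your proposal is correct, but note that the paper does not actually prove this proposition: it only remarks that Theorem 4.2 of Duncan--Hu--Pasik-Duncan carries over to the weighted kernel $\phi$, and the argument you outline (the commutation relation $D^{a,b}_r\delta^{a,b}(u)=u(r)+\delta^{a,b}(D^{a,b}_r u)$ on elementary processes, integration against the symmetric kernel $\phi(r,s)$ with a stochastic Fubini step, and passage to the limit using the definition of $\mathcal{L}(0,T)$ and closedness of the derivative) is precisely the standard proof of that cited theorem. So your approach is essentially the same as the one the paper implicitly relies on, and no genuine gap is present beyond the routine verification, which you correctly identify, that the swapped integrand lies in the domain of $\delta^{a,b}$.
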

Now, consider the following linear SDE driven by weighted fractional Brownian motion
\begin{align}\label{psiuni}
 d\psi(t)&=A\psi(t)dt+B \psi(t)\delta^{a,b}B^{a,b}(t),~ as~~t\in[0,T],  \quad and \quad \psi(0)=1.
 \end{align}
To show main results, we start with showing that, the SDE (\ref{psiuni}) has a unique solution $\psi(.)$ with an exponential exposition and it has Malliavin derivatives which can be presented as a function of the solution $\psi(.)$.\\
Thanks to  Proposition \ref{ito}, following as in the proof of Theorem 2.5. and Lemma 2.2 in \cite{flinear},  one can coclude that the solution of (\ref{psiuni}) is $$\psi(t)=exp\Big\{At+B B^{a,b}(t)-\frac12 B^2\int_0^t\int_0^t \phi(s,s')dsds'\Big\}$$ and  $D^{a,b}_r\psi(t)=\psi(t)B1_{0 \leq r \leq t}$ and equivalently $(D^{a,b}_r)^\phi \psi(t)=\psi(t)B \int_0^t \phi(r,s)ds$. This fact leads to the conclusion that, the process $\psi^{-1}$ is the solution to the SDE
\begin{align*}
 d\psi^{-1}(t)&=\Big(-A+ B^2\int_0^t\int_0^t \phi(s,s')dsds'\Big)\psi^{-1}(t)dt-B \psi^{-1}(t)\delta^{a,b}B^{a,b}(t),~~~t\in[0,T],\\
 \psi^{-1}(0)&=1,
\end{align*}
 and $D^{a,b}_r\psi^{-1}(t)=-\psi^{-1}(t)B1_{0 \leq r \leq t}$, which is equivalent to $(D^{a,b}_r)^\phi \psi^{-1}(t)=-\psi^{-1}(t)B \int_0^t \phi(r,s)ds$.
\begin{theorem}\label{psi}
The unique solution $\psi(.)$ has uniformly bounded moments; i.e.,  there exists some positive constant $C_{p}$ such that 
\begin{equation*}
\mathbb{E}\Big(\sup_{0 \leq t \leq T} \vert \psi(t) \vert^p  \Big) \leq C_{p}
\end{equation*}
\end{theorem}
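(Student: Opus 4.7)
The plan is to exploit the explicit exponential representation of $\psi$ derived immediately before the statement,
\[
\psi(t)=\exp\bigl\{At+BB^{a,b}(t)-\tfrac12 B^{2}K(t)\bigr\},\qquad K(t):=\int_{0}^{t}\!\int_{0}^{t}\phi(s,s')\,ds\,ds',
\]
and to reduce the moment estimate to a classical Fernique-type bound for the supremum of a centered continuous Gaussian process.

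First I would observe that under the standing parameter regime ($a>-1$ together with $b>0$, the latter being in force for the Malliavin calculus developed in Section~3) the kernel $\phi(t,s)=b(t\wedge s)^{a}(t\vee s-t\wedge s)^{b-1}$ is nonnegative, and a direct Beta-function computation gives
\[
K(T)=2b\int_{0}^{T}\!\int_{0}^{s} t^{a}(s-t)^{b-1}\,dt\,ds=2b\,\mathbb{B}(a+1,b)\,\frac{T^{a+b+1}}{a+b+1}<\infty,
\]
the last inequality using $a+b+1>0$, which follows from $|b|<a+1$. Consequently $-\tfrac12 B^{2}K(t)\le 0$ for every $t\in[0,T]$, and for any $p\ge 1$ I may simply discard this nonpositive term to obtain the pointwise majorisation
\[
\sup_{0\le t\le T}|\psi(t)|^{p}\le e^{p|A|T}\exp\!\Bigl\{p|B|\sup_{0\le t\le T}\bigl|B^{a,b}(t)\bigr|\Bigr\}.
\]

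Second, I would invoke the right-hand inequality in \eqref{bfbm}, namely $\mathbb{E}(B^{a,b}(t)-B^{a,b}(s))^{2}\le K_{a,b}T^{a}|t-s|^{1+b}$, together with the Gaussianity of the increments and Kolmogorov's continuity criterion to assert that $B^{a,b}$ admits a version whose sample paths are H\"older-continuous of any order strictly less than $(1+b)/2$, hence continuous on the compact interval $[0,T]$. Being a centered Gaussian process with continuous sample paths on a compact set, $B^{a,b}$ satisfies the hypotheses of Fernique's theorem, so
\[
\mathbb{E}\Bigl[\exp\!\bigl(c\sup_{0\le t\le T}|B^{a,b}(t)|\bigr)\Bigr]<\infty\quad\text{for every }c>0.
\]
Taking expectations in the previous display with $c=p|B|$ yields a finite constant $C_{p}=C_{p}(A,B,a,b,T)$, which is the claimed bound.

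The argument is almost entirely soft: the only place where the parameter regime intervenes is the finiteness of $K(T)$, an elementary Beta integral. If $A$ and $B$ are interpreted as commuting matrices rather than scalars (commutativity is already implicit in the exponential formula used to define $\psi$), the same estimates apply verbatim with operator norms in place of $|A|$ and $|B|$. I do not anticipate a genuine obstacle; the main conceptual point is simply that, once the explicit formula is on the table, the problem collapses to a standard Fernique estimate.
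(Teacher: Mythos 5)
Your proof is correct, but it reaches the bound by a genuinely different route than the paper. Both arguments start from the explicit exponential representation $\psi(t)=\exp\{At+BB^{a,b}(t)-\tfrac12 B^2\int_0^t\int_0^t\phi(s,s')\,ds\,ds'\}$ derived just before the statement. The paper then appeals to the moment formula for geometric (weighted) fractional Brownian motion --- Theorem 3.3 of \cite{Duncan} combined with the It\^o formula of Proposition \ref{ito} --- and records the exact Gaussian exponential moment, arriving at the explicit constant $\exp\{pAT+\tfrac12(p^2-p)\int_0^T\int_0^T\phi(s,s')\,ds\,ds'\}$; that computation is pointwise in $t$, and the passage from pointwise $p$-moments to $\mathbb{E}\sup_{t}|\psi(t)|^p$ is left implicit. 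You instead discard the nonpositive variance correction (legitimate, since $\phi\ge 0$ when $b>0$), majorise the path supremum of $|\psi|^p$ pathwise by $e^{p|A|T}\exp\{p|B|\sup_{0\le t\le T}|B^{a,b}(t)|\}$, and close with Kolmogorov continuity plus Fernique's theorem. What your route buys is an honest control of the supremum inside the expectation --- arguably the cleanest way to justify the statement exactly as written --- at the price of a non-explicit constant; what the paper's route buys is a sharp, explicit constant. One minor imprecision to fix: when $-1<a<0$, the bound (\ref{bfbm}) yields $\mathbb{E}(B^{a,b}(t)-B^{a,b}(s))^2\lesssim |t-s|^{a+b+1}$ near the origin rather than $|t-s|^{1+b}$, so the H\"older exponent there is $(a+b+1)/2$, not $(1+b)/2$; since $a+b+1>0$ and the increments are Gaussian, Kolmogorov's criterion applied with sufficiently high moments still gives continuity on $[0,T]$, which is all that Fernique's theorem requires, so the conclusion stands.
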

\begin{proof} 
According to the relation $D^{a,b}_r\psi(t)=\psi(t)B1_{0 \leq r \leq t}$ and inequality (\ref{tavan2}) in connection with inclusion (\ref{sub1}) and then Gronwall's inequality, the uniquenss of the solution $\psi$ is resulted. The boundedness of the $p$-momoents of $\psi(.)$ follows from Theorem 3.3 in \cite{Duncan} and Proposition \ref{ito}), and deduces that there exists some constant $C_p$ such that 
\begin{equation*}
\mathbb{E}\Big(\sup_{0 \leq t \leq T} \vert \psi(t) \vert^p  \Big) \leq exp^{\Big\{pAT+\frac12(p^2-p)\int_0^T\int_0^T \phi(s,s')ds ds'\Big\} }\leq C_p.
\end{equation*}
\end{proof}
\begin{proposition}\label{boundpsi}
For every $p \geq 1$, there exists some constant $C_{\psi,p}$ such that 
\begin{equation*}
\mathbb{E}\Big(\sup_{0 \leq r \leq T}\vert \psi(r)^{-1}\vert^p\Big)+\mathbb{E}\left(\int_{0}^{T}\int_{0}^{T}\left|D_{s}^{a,b}\psi^{-1}(r)\right|^p drds \right)\leq C_{\psi,p} < \infty,
\end{equation*}
\end{proposition}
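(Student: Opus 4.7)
The plan is to exploit the explicit exponential representation
\begin{equation*}
\psi^{-1}(t) = \exp\Bigl\{-At - B\,B^{a,b}(t) + \tfrac12 B^{2}\,R^{a,b}(t,t)\Bigr\},
\end{equation*}
obtained by inverting the formula for $\psi(t)$ given just before Theorem \ref{psi}, where $R^{a,b}(t,t)=\int_0^t\!\int_0^t \phi(s,s')\,ds\,ds'$. Under the standing assumption $a+b<1$, the inclusions (\ref{sub1}) together with the definition (\ref{inner}) ensure that $C_\phi := R^{a,b}(T,T)<\infty$. Since $R^{a,b}(t,t)$ is non-decreasing in $t$, this yields the pointwise bound $|\psi^{-1}(t)|^{p} \leq K_{p,T}\,\exp\{-pB\,B^{a,b}(t)\}$ for every $t\in[0,T]$, with $K_{p,T}:=\exp\{|pA|T+\tfrac{p}{2}B^{2}C_\phi\}$.

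First, I would bound the supremum term. The process $\{B^{a,b}(t)\}_{t\in[0,T]}$ is a centered Gaussian process with H\"older continuous sample paths (see the comments following (\ref{bfbm})) and variance uniformly bounded by $C_\phi$. Fernique's theorem therefore implies $\mathbb{E}\exp\{q\sup_{0\leq t\leq T}|B^{a,b}(t)|\}<\infty$ for every $q\geq 1$, and combining this with the pointwise bound above yields
\begin{equation*}
\mathbb{E}\Bigl(\sup_{0\leq r\leq T}|\psi^{-1}(r)|^{p}\Bigr)\leq K_{p,T}\,\mathbb{E}\exp\Bigl\{p|B|\sup_{0\leq t\leq T}|B^{a,b}(t)|\Bigr\}<\infty.
\end{equation*}

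For the Malliavin derivative term, I would simply use the identity $D^{a,b}_{s}\psi^{-1}(r)=-B\,\psi^{-1}(r)\mathbf{1}_{[0,r]}(s)$ recorded just above the proposition. It yields
\begin{equation*}
\int_{0}^{T}\!\!\int_{0}^{T} |D^{a,b}_{s}\psi^{-1}(r)|^{p}\,ds\,dr \leq |B|^{p}\,T^{2}\sup_{0\leq r\leq T}|\psi^{-1}(r)|^{p},
\end{equation*}
so that taking expectations reduces to the estimate of the previous paragraph.

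The only delicate point is the exponential integrability of $\sup_{0\leq t\leq T}|B^{a,b}(t)|$; once this is granted via Fernique, everything else is routine. An alternative that avoids any external appeal is to mimic the argument used in the proof of Theorem \ref{psi}: apply Proposition \ref{ito} with $f(x)=|x|^{p}$ to the SDE for $\psi^{-1}$ displayed just before Theorem \ref{psi}, use the identity $(D^{a,b}_{r})^{\phi}\psi^{-1}(t)=-B\,\psi^{-1}(t)\int_{0}^{t}\phi(r,s')\,ds'$, and then combine a Gr\"onwall-type estimate on $\mathbb{E}|\psi^{-1}(t)|^{p}$ with the maximal inequality of Corollary \ref{l54} to pass to the supremum. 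Either route yields the required constant $C_{\psi,p}$.
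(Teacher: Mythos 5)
Your proof is correct and follows essentially the same route the paper takes: the paper states Proposition \ref{boundpsi} without an explicit proof, but its argument for the analogous Theorem \ref{psi} rests on the same explicit exponential formula for $\psi^{\pm 1}$ and the identity $D^{a,b}_s\psi^{-1}(r)=-B\psi^{-1}(r)\mathbf{1}_{0\leq s\leq r}$, exactly as you use. The only difference is that you justify the exponential integrability of $\sup_{0\leq t\leq T}|B^{a,b}(t)|$ via Fernique's theorem where the paper cites Theorem 3.3 of Duncan et al.; this is a minor substitution, and if anything your treatment of the supremum is the more carefully argued of the two.
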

 As a consequence, the stochastic processes $\psi(.)$ and $\psi^{-1}(.)$ are in $\mathcal{L}(0,T)$ and satisfy the condition of Theorem \ref{ito}  with $\alpha_3=(1+b)/2$, applying Equations (\ref{tavan2}) and (\ref{bfbm}).\\
Now, we are ready to construct the solution of  Equation  (\ref{s1}) in the following theorem and obtain its Malliavin derivative. 
\begin{theorem}\label{exists}
Under Assumption \ref{l548} and the first part of Assumption \ref{assum2},  SDE (\ref{s1}) admits a unique solution on $[-\tau, T]$ which is also Malliavin differentiable up to the order $2M$.
\end{theorem}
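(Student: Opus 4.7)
The plan is to construct the solution and establish its Malliavin regularity by induction on the subintervals $I_j := [j\tau,(j+1)\tau]$ for $j=0,1,\ldots,K_0-1$, and then on $[K_0\tau,T]$. On each $I_j$, the quantity $x(t-\tau)$ has already been produced at the preceding step (with $x(t-\tau)=\xi_0(t-\tau)$ on $I_0$), so (\ref{s1}) becomes an inhomogeneous linear equation in $x$:
\begin{equation*}
dx(t)=[Ax(t)+g_j(t)]\,dt+[Bx(t)+h_j(t)]\,\delta^{a,b}B^{a,b}(t), \qquad t\in I_j,
\end{equation*}
with known coefficients $g_j(t):=f(x(t-\tau))$ and $h_j(t):=\sigma(x(t-\tau))$. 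Applying the It\^o formula for the Skorokhod integral (Proposition \ref{ito1}) to the product $\psi^{-1}(t)x(t)$, where $\psi$ is the process introduced in (\ref{psiuni}), one obtains a closed variation-of-constants formula
\begin{equation*}
x(t)=\psi(t)\psi(j\tau)^{-1}x(j\tau)+\psi(t)\!\int_{j\tau}^{t}\!\psi^{-1}(s)g_j(s)\,ds+\psi(t)\!\int_{j\tau}^{t}\!\psi^{-1}(s)h_j(s)\,\delta^{a,b}B^{a,b}(s),
\end{equation*}
up to the deterministic correction terms arising from $(D^{a,b})^{\phi}$ in the It\^o formula. Existence and uniqueness on each $I_j$ follow directly from this explicit representation together with the Lipschitz property of $f$ and $\sigma$ (Assumption \ref{l548}) and a Gronwall estimate controlling the difference of two candidate solutions.

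For Malliavin differentiability I would proceed recursively. On $I_0$, $g_0$ and $h_0$ are compositions of $f,\sigma$ with the Malliavin-differentiable function $\xi_0(\cdot-\tau)$ (Assumption \ref{assum2}); since $\psi,\psi^{-1}$ belong to $\mathcal{L}(0,T)$ and are smooth in the Malliavin sense with bounded $L^p$-moments (Theorem \ref{psi}, Proposition \ref{boundpsi}), the chain rule and the commutation formula of Proposition \ref{dfphi} for the Skorokhod integral give $x\in\mathbb{D}^{2M,p}$ on $I_0$. Assuming inductively that $x$ is Malliavin-differentiable up to order $2M$ on $I_{j-1}$, the Fa\`a di Bruno formula applied to $f(x(t-\tau))$ and $\sigma(x(t-\tau))$ expresses $D^{(k)}_{r_1,\ldots,r_k}g_j(t)$ and $D^{(k)}_{r_1,\ldots,r_k}h_j(t)$ for $k\le 2M$ as polynomials in the lower-order Malliavin derivatives of $x(\cdot-\tau)$ with coefficients $f^{(p)}(x(t-\tau))$ and $\sigma^{(p)}(x(t-\tau))$ of order $p\le 2M$; Assumption \ref{l548} supplies the required boundedness. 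Since the induction traverses only $K_0+1<2M$ steps, the smoothness budget of $f$ and $\sigma$ is never exhausted, and differentiability of order $2M$ is propagated to $I_j$.

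The principal obstacle is propagating finite $L^p$ bounds on the iterated Malliavin derivatives across the $K_0+1$ sub-steps without blow-up. The key tool here is the maximal inequality of Theorem \ref{l0} (more precisely Corollary \ref{corr1} adapted to the sub-interval $I_j$), which bounds $\mathbb{E}\sup_{t\in I_j}\bigl|\int_{j\tau}^{t}\psi^{-1}(s)h_j(s)\,\delta^{a,b}B^{a,b}(s)\bigr|^p$ in terms of $L^{2/(a+b+1)}$-norms of $h_j$, $D^{a,b}h_j$ and their iterates. Combined with the Lipschitz/boundedness estimates on $f,\sigma$ and H\"older's inequality applied to the Faà di Bruno expansion, these yield a Gronwall-type inequality on $I_j$ that closes the induction and delivers the desired $L^p$-control on $x$ and its Malliavin derivatives up to order $2M$ on all of $[-\tau,T]$.
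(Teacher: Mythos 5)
Your proposal follows essentially the same route as the paper: stepwise construction over the delay intervals $[j\tau,(j+1)\tau]$, reduction to an inhomogeneous linear Skorokhod equation on each step, a variation-of-constants representation built from the fundamental solution $\psi$ of (\ref{psiuni}) via the It\^o formula of Proposition \ref{ito1} (the paper verifies the formula by differentiating $e^{-B\int_0^t\int_0^s\phi(s,u)\,du\,ds}\psi(t)Z_j(t)$, which is the same correction term you flag), and recursive Malliavin differentiation closed by the budget $K_0+1<2M$. The only cosmetic difference is that you fold the $L^p$-moment propagation (the paper's Section 6) into the proof itself, whereas the paper defers it to Theorem \ref{upperderi}.
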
 
\begin{proof}
We will prove the assertion in four steps for the convenience of readers. The uniqueness of the solution throughout any step stands on a similar proof of Theorem \ref{psi} to show the uniqueness of $\psi(.)$ to SDE (\ref{psiuni}).\\
{\bf step 1}.  For $t \in [0,\tau]$, define the Gaussian process $$M_0(t)=\int_0^t f(\xi_0(s-\tau)) ds-B\int_0^t\int_0^s \sigma(\xi_0(s-\tau))\phi(u,s)du ds +\int_0^t \sigma(\xi_0(s-\tau) )\delta^{a,b}B^{a,b}(s)$$ and consider the following stochastic differential equation
 \begin{equation*}
 X_0(t)= \xi_0(0) + \int_0^t AX_0(s) ds + \int_0^t BX_0(s) \delta^{a,b}B^{a,b}(t)+ M_0(t)\quad t \in [0,\tau]
 \end{equation*}
To introduce the solution $X_0$, we first denote $Z_0(t):= \xi_0(0) + \int_0^t \psi^{-1}(s) dM_0(s)$. In view of Proposition \ref{dfphi}, since $\psi^{-1} \in \mathcal{L}[0,T]$  and for every $-\tau \leq s < 0$ and $r \geq 0$, we know $D_r^{a,b} \xi_0(s)=0$, therefore for any $0 \leq r \leq t$
\begin{align*}
D_r^{a,b}Z_0(t) &=\psi^{-1}(r)\sigma\Big(\xi_0(r-\tau)\Big)-\int_r^t  B\psi^{-1}(s)\left(f(\xi_0(s-\tau))-\int_0^s \sigma(\xi_0(s-\tau))\phi(u,s)du\right) ds \\
& -\int_r^t B
\psi^{-1}(s)\sigma(\xi_0(s-\tau))\delta^{a,b}B^{a,b}(t) \\
& = \psi^{-1}(r)\sigma(\xi_0(r-\tau)) - BZ_0(t)+BZ_0(r).
\end{align*}
It deduce that $D_t^{a,b}Z_0(t)= \psi^{-1}(t)\sigma(\xi_0(t-\tau))$, or equivalently  $$(D_t^{a,b})^\phi Z_0(t)= \psi^{-1}(t)\int_0^t \sigma(\xi_0(u-\tau))\phi(u,t) du.$$
Applying Ito's formula (\ref{ito1}) for $U_0(t):=e^{-B\int_0^t \int_0^s \phi(s,u)du ds}\psi(t)Z_0(t)$ and then substituting $(D_t^{a,b})^\phi\psi(t)$ and $(D_t^{a,b})^\phi Z_0(t)$ result 
\begin{align*}
dU_0(t) &=e^{-B\int_0^t \int_0^s \phi(s,u)du ds}\Big\{B(\int_0^t \phi(t,u)du )\psi(t)Z_0(t) dt + A\psi(t)Z_0(t) dt 
+ B\psi(t)Z_0(t) \delta^{a,b}B^{a,b}(t) \nonumber\\
&+  \psi(t) \psi^{-1}(t)dM_0(t) + (D_t^{a,b})^\phi\psi(t). \psi^{-1}(t) \sigma(\xi_0(t-\tau)) dt+  B\psi(t)(D_t^{a,b})^\phi Z_0(t) dt \Big\}  \nonumber \\
& = \Big\{ AU_0(t) +f(\xi_0(s-\tau))\Big\} dt + \Big\{ BU_0(t) +  \sigma(\xi_0(s-\tau) )\Big\}\delta^{a,b}B^{a,b}(t).
\end{align*}
It  show that for every $0\leq t \leq \tau$, the process $e^{-B\int_0^t \int_0^s \phi(s,u)du ds}\psi(t)Z_0(t)$ is a solution to SDE (\ref{s1}). Next, due to Proposition \ref{dfphi} and the relationship of $ D_r^{a,b}$ and $ (D_t^{a,b})^\phi$ in Proposition \ref{ito}, one deduce that $x(t)$ in the time interval $[0, \tau]$ has a weak derivative satisfying 
\begin{equation*}
 D_r^{a,b} X_0(t)=B X_0(r)+\int_r^t A D_r^{a,b} X_0(s)ds +\int_r^t B D_r^{a,b} X_0(s)\delta^{a,b}B^{a,b}(s),~~~t\in[0,\tau],
\end{equation*}
for every $0 <  r \leq t$. \\
{\bf step 2}. For every $k=1, \cdots, K_0-1$ and for all  $k\tau \leq t \leq (k+1)\tau$, define again the Gaussian processes $$M_k(t)=\int_{k\tau}^t f(X_k(s-\tau)ds -B\int_{k\tau}^t\int_0^s \sigma(X_k(s-\tau))\phi(u,s)du ds +\int_{k\tau}^t \sigma(X_k(s-\tau) )\delta^{a,b}B^{a,b}(t),$$ and consider the following stochastic differential equations
 \begin{equation*}
 X_{k}(t)=  X_{k-1}(k\tau) + \int_{k\tau}^t AX_{k}(s) ds + \int_{k\tau}^t BX_{k}(s) \delta^{a,b}B^{a,b}(t)+ M_{k-1}(t),  \quad t \in [k\tau,(k+1)\tau].
 \end{equation*}
similarly, if we define $Z_k(t):= X_{k-1}(k\tau) + \int_{k\tau}^t \psi^{-1}(s) dM_{k-1}(s)$, as $\psi^{-1} \in \mathcal{L}[0,T]$  and for every $-k\tau \leq s < (k+1)\tau$ and $r \geq k\tau$ we know $D_r^{a,b} X_{k-1}(s)=0$, then for any $k\tau \leq r \leq t$
\begin{align*}
D_r^{a,b}Z_k(t) &=0-\int_r^t B\psi^{-1}(s)\left(f(X_{k-1}(s-\tau))-\int_0^s \sigma(X_{k-1}(s-\tau))\phi(u,s)du\right) ds \\
& -\int_r^t B
\psi^{-1}(s)\sigma(X_{k-1}(s-\tau))\delta^{a,b}B^{a,b}(t) +\psi^{-1}(r)\sigma(X_{k-1}(r-\tau))\\
& = \psi^{-1}(r)\sigma(X_{k-1}(r-\tau)) - BZ_k(t)+BZ_k(r).
\end{align*}
Clearly, it deduce that $D_t^{a,b}Z_k(t)= \psi^{-1}(t)\sigma(X_{k-1}(t-\tau))$, or equivalently 
\begin{equation}\label{dphi}
(D_t^{a,b})^\phi Z_k(t)= \psi^{-1}(t)\int_0^t \sigma(X_{k-1}(u-\tau))\phi(u,t) du.
\end{equation}
 We employ Ito's formula (\ref{ito1}) for $U_k:=e^{-B\int_0^t \int_0^s \phi(s,u)du ds}\psi(t)Z_k(t)$ and substitute (\ref{dphi}) and get that  
\begin{align*}
dU_k(t) &=e^{-B\int_0^t \int_0^s \phi(s,u)du ds}\Big\{B(\int_0^t \phi(t,u)du )\psi(t)Z_k(t) dt + A\psi(t)Z_k(t) dt + B\psi(t)Z_k(t) \delta^{a,b}B^{a,b}(t) \\
&+  \psi(t) \psi^{-1}(t)dM_{k-1}(t) + (D_t^{a,b})^\phi\psi(t). \psi^{-1}(t) \sigma(X_{k-1}(t-\tau)) dt+  B\psi(t)(D_t^{a,b})^\phi Z_k(t) dt \Big\}  \\
& = \Big\{ U_k(t) +f((X_{k-1}(t-\tau))\Big\} dt + \Big\{ BU_k(t) +  \sigma(X_{k-1}(t-\tau) )\Big\}\delta^{a,b}B^{a,b}(t).
\end{align*}
 Thus SDE (\ref{s1}) has the solution  $e^{-B\int_0^t \int_0^s \phi(s,u)du ds}\psi(t)Z_k(t)$ for every  $k\tau \leq t \leq (k+1)\tau$. Next, Proposition \ref{dfphi} results this solution has also a Malliavin derivative satisfying 
\begin{align}
 D_r^{a,b} X_{k}(t)&=D_r^{a,b}X_{k-1}(k\tau)+ B X_{k}(r)+\sigma(X_{k-1} (r-\tau))1_{k\tau \leq r \leq t-\tau}\nonumber \\
&+\int_{k\tau\vee r}^t\left(A D_r^{a,b} X_{k}(s)+f'(X_{k-1}(s-\tau))D_r^{a,b}X_{k-1}(s-\tau)1_{ r\leq s-\tau}\right)ds \nonumber\\
&+\int_{k\tau \vee r}^t\left(B D_r^{a,b} X_{k}(s)+\sigma'(X_{k-1}(s-\tau))D_r^{a,b}X_{k-1}(s-\tau)1_{ r\leq s-\tau}\right)\delta^{a,b}B^{a,b}(s),~~~t\in[k\tau ,(k+1)\tau], \label{d1k}
 \end{align}
for every $0 \leq r \leq t-\tau$ and also
\begin{equation}\label{d1kd}
 D_r^{a,b} X_{k}(t)=B X_{k}(r)+\int_{k\tau \vee r}^t A D_r^{a,b} X_{k}(s)ds +\int_{k\tau \vee r}^t B D_r^{a,b} X_{k}(s)\delta^{a,b}B^{a,b}(s),~~~t\in[k\tau ,(k+1)\tau],
\end{equation}
for every $t-\tau <  r \leq t$.\\
{\bf Step 3}.  Perform step 2 for $k=K_0$ and result that for every $t \in [K_0\tau, T]$,  SDE (\ref{s1}) has a unique solution  $e^{-B\int_0^t \int_0^s \phi(s,u)du ds}\psi(t)Z_{K_0}(t)$ for every  $K_0\tau \leq t \leq T$ with Malliavin derivative satisfying (\ref{d1k}) and (\ref{d1kd}).  In this sense, it is sufficient to define 
\begin{equation}\label{defx}
 x(t) = \sum_{k=0}^{K_0-1} X_k(t) 1_{k\tau \leq t \leq (k+1)\tau} + X_{K_0}(t) 1_{K_0 \tau \leq t \leq T}. 
\end{equation}
Therefore, for every $0 \leq r \leq t-\tau$, $D^{a,b}x(t)$ should satisfy 
 \begin{align}\label{der}
 D_r^{a,b}x(t)&=D_r^{a,b}\xi_0(0)+Bx(r)+\sigma(x (r-\tau))1_{k\tau \leq r \leq t-\tau}\nonumber \\
&+\int_r^t\left\{A D_r^{a,b}x(s)+f'(x(s-\tau))D_r^{a,b}x(s-\tau)1_{0 \leq r\leq s-\tau}\right\}ds \nonumber\\
&+\int_r^t\left(B D_r^{a,b}x(s)+\sigma'(x(s-\tau))D_r^{a,b}x(s-\tau)1_{ 0 \leq r\leq s-\tau}\right)\delta^{a,b}B^{a,b}(s),~~~t\in[0,T],
 \end{align}
and for every $t-\tau <  r \leq t$,
  \begin{equation}\label{der2}
 D_r^{a,b}x(t)=Bx(r)+\int_r^t A D_r^{a,b}x(s)ds +\int_r^t B D_r^{a,b}x(s)\delta^{a,b}B^{a,b}(s),~~~t\in[0,T],
 \end{equation}
{\bf Step 4}. We continue the steps 1, 2 and 3 for Equation (\ref{der}) instead of Equation (\ref{s1}) to derive Malliavin differentiability of $D_r^{a,b}x(t)$. Finally we repeat this procedure up to the order $2M$, the order of differentiability of the functions $f$ and $g$, to deduce the assertion.
\end{proof}
We end this section by giving a recursively expression for higher order Malliavin derivatives of the solution $x(.)$,  provided we continue differentiating of Equations (\ref{d1k}) or (\ref{der}). Our expression deal with the case $0 \leq r_1, \ldots,  r_l\leq t-\tau$, the other cases have the same computation. We note that our modification will be useful to obtain some bounds for their moments in the next section.\\ 
First, let us define the processes $H_{r_l\cdots r_1},G_{r_l\cdots r_1}(s), F_{r_l\cdots r_1}(s)$ for every $2 \leq  l \leq 2M$ as follows for simplicity. 
\begin{align*}
{1}_{(1,\cdots, l)}& := 1_{r_l > (k-1)\tau} \prod_{i=1}^{l-1}1_{r_i +\tau < r_l}, \quad  {1}_{(2,\cdots, l)}:= 1_{r_1> (k-1)\tau} \prod_{i=2}^{l}1_{r_i +\tau < r_1}  \\
& {1}_{(1,\cdots, j)}= 1_{r_j > (k-1)\tau} \prod_{i=1, i \neq j}^{l-1}1_{r_i +\tau < r_j} \quad  2 \leq j \leq l-1
\end{align*}
\begin{align*}
H_{r_l\cdots r_1}(k)& := B D^{a,b}_{r_l}\cdots D^{a,b}_{r_2}X_k(r_1)+ D^{a,b}_{r_l}\cdots D^{a,b}_{r_2}\Big(\sigma(X_{k-1}(r_1-\tau))\Big)1_{(2,\cdots, l)}
+B D^{a,b}_{r_{l-1}}\cdots D^{a,b}_{r_1}X_k(r_l) \\
&+D^{a,b}_{r_{l-1}}\cdots D^{a,b}_{r_1}\Big( \sigma(X_{k-1}(r_l-\tau))\Big){1}_{(1,\cdots, l)}
+B \sum_{j=2}^{l-1}  D^{a,b}_{r_{l}}\cdots D^{a,b}_{r_{j+1}}D^{a,b}_{r_{j-1}}\cdots D^{a,b}_{r_1}X_k(r_j)  \\
&+ \sum_{j=2}^{l-1}  D^{a,b}_{r_{l}}\cdots D^{a,b}_{r_{j+1}}D^{a,b}_{r_{j-1}}\cdots D^{a,b}_{r_1}\left( \sigma(X_{k-1}(r_j-\tau))\right)1_{(1,\cdots, j)},
\end{align*}
\begin{equation*}
F_{r_l\cdots r_1}(X_{k-1},s):= D^{a,b}_{r_l}\cdots D^{a,b}_{r_1}\{f(X_{k-1}(s-\tau))\}, \qquad  G_{r_l\cdots r_1}(X_{k-1},s):= D^{a,b}_{r_l}\cdots D^{a,b}_{r_1}\{\sigma(X_{k-1}(s-\tau))\}. 
\end{equation*}
 Then $l$-th derivative for every $k\tau \leq t \leq (k+1)\tau$ as $k=1,\cdots, K_0$ and also every $0 \leq r_1, \ldots,  r_l\leq t-\tau$  satisfies
\begin{align}
D^{a,b}_{r_l}\cdots D^{a,b}_{r_1}X_k(t) & =D^{a,b}_{r_l}\cdots D^{a,b}_{r_1}X_{k-1}(k\tau)+H_{r_l\cdots r_1}(k) \nonumber \\
&+\int_{k\tau \vee r_1 \vee \cdots \vee r_l}^{t} \left\{ AD^{a,b}_{r_l}\cdots D^{a,b}_{r_1}X_k(s) +F_{r_l\cdots r_1}(X_{k-1},s) \prod_{i=1}^l 1_{r_i + \tau < s} \right\}ds  \nonumber\\
&+ \int_{k\tau  \vee r_1 \vee \cdots \vee r_l}^{t} \left\{BD^{a,b}_{r_l}\cdots D^{a,b}_{r_1}X_k(s) +G_{r_l\cdots r_1}(X_{k-1},s)  \prod_{i=1}^l 1_{r_i + \tau < s} \right\} \delta^{a,b}B^{a,b}(s),\label{dddl}
\end{align}
and for every $K_0\tau \leq t \leq T$
\begin{align*}
D^{a,b}_{r_l}\cdots D^{a,b}_{r_1}X_k(t) & =D^{a,b}_{r_l}\cdots D^{a,b}_{r_1}X_{k-1}(k\tau)+ H_{r_l\cdots r_1}(k) \nonumber \\
&+\int_{K_0\tau  \vee r_1 \vee \cdots \vee r_l}^{t} \left\{ AD^{a,b}_{r_l}\cdots D^{a,b}_{r_1}X_k(s) +F_{r_l\cdots r_1}(X_{k-1},s)\prod_{i=1}^l 1_{r_i + \tau < s}  \right\}ds  \nonumber\\
&+ \int_{K_0\tau \vee r_1 \vee \cdots \vee r_l}^{t} \left\{BD^{a,b}_{r_l}\cdots D^{a,b}_{r_1}X_k(s) +G_{r_l\cdots r_1}(X_{k-1},s)\prod_{i=1}^l 1_{r_i + \tau < s} \right\} \delta^{a,b}B^{a,b}(s).
\end{align*}
According to Step 3 in the proof of Theorem \ref{exists}, for every $0 \leq r_1, \ldots, r_l\leq t-\tau$ we have 
\begin{equation}\label{defdddx}
 D^{a,b}_{r_l}\cdots D^{a,b}_{r_1}x(t) = \sum_{k=0}^{K_0-1} D^{a,b}_{r_l}\cdots D^{a,b}_{r_1}X_k(t) 1_{k\tau \leq t \leq (k+1)\tau} + D^{a,b}_{r_l}\cdots D^{a,b}_{r_1}X_{K_0}(t) 1_{K_0\tau \leq t \leq T},
\end{equation} 
where $X_k(t)=e^{-B\int_0^t \int_0^s \phi(s,u)du ds}\psi(t)Z_{k,r_1,...,r_l}(t)$ in which 
 $$Z_{k,r_1,...,r_l}(t)=:=D^{a,b}_{r_l}\cdots D^{a,b}_{r_1}X_{k-1}(k\tau)+ H_{r_l\cdots r_1}(k)+ \int_{k\tau \vee r_1 \vee \cdots \vee r_l}^t \psi^{-1}(s) dM_{k-1,r_1,...,r_l}(s)$$ and 
\begin{align*}
M_{k,r_1,...,r_l}(t)=\int_{k\tau \vee r_1 \vee \cdots \vee r_l}^t & F_{r_l\cdots r_1}(X_k,s) ds-B\int_{k\tau \vee r_1 \vee \cdots \vee r_l}^t \int_0^s G_{r_l\cdots r_1}(X_k,s)\phi(u,s) du ds \\
&+\int_{k\tau \vee r_1 \vee \cdots \vee r_l}^t G_{r_l\cdots r_1}(X_k,s)\delta^{a,b}B^{a,b}(t).
 \end{align*}
\section{Bounds of moments to solution and its derivatives}
Construction of the solution of Equation (\ref{s1}) in the proof of Theorem \ref{exists} allow us to show that this solution and its Malliavin derivatives have uniformly bounded $p$-moments for every $p \geq 1$.
\begin{theorem}
For every $p \geq 2$, under Assumptions \ref{l548} and \ref{assum2}, there exists some positive constant $C_{0,p}$ such that 
\begin{equation}\label{ss}
\mathbb{E}\Big(\sup_{0 \leq t \leq T}\vert x(t) \vert^p\Big)  \leq C_{0,p}
\end{equation}
\end{theorem}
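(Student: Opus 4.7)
The plan is to proceed by induction on the step index $k = 0, 1, \ldots, K_0$ over the intervals $[k\tau,(k+1)\tau]$ (and finally $[K_0\tau, T]$), exploiting the explicit representation
\begin{equation*}
X_k(t) = e^{-B\int_0^t \int_0^s \phi(s,u)\,du\,ds}\,\psi(t)\,Z_k(t), \qquad Z_k(t) = X_{k-1}(k\tau) + \int_{k\tau}^t \psi^{-1}(s)\,dM_{k-1}(s),
\end{equation*}
derived in the proof of Theorem \ref{exists}. Because the deterministic exponential factor is uniformly bounded on $[0,T]$ and the $2p$-moments of $\sup_{0 \le t \le T}|\psi(t)|$ are controlled by Theorem \ref{psi}, a Cauchy--Schwarz argument reduces the task to bounding $\mathbb{E}\sup_{k\tau \le t \le (k+1)\tau}|Z_k(t)|^{2p}$ given the inductive control on the previous interval. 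The base case $k=0$ uses Assumption \ref{assum2} for $\sup_{-\tau \le s \le 0}|\xi_0(s)|^{2p}$, while subsequent steps inherit $X_{k-1}(k\tau)$ directly from the induction hypothesis.

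The increment $\int_{k\tau}^t \psi^{-1}(s)\,dM_{k-1}(s)$ decomposes into a Lebesgue integral with integrand $\psi^{-1}(s)f(X_{k-1}(s-\tau))$, a correction term $-B\psi^{-1}(s)\sigma(X_{k-1}(s-\tau))\int_0^s\phi(u,s)\,du$, and a Skorohod integral with integrand $\psi^{-1}(s)\sigma(X_{k-1}(s-\tau))$. For the first two, Jensen's inequality together with the bound on $\sup|\psi^{-1}|^p$ from Proposition \ref{boundpsi}, the linear growth $|f|^2+|\sigma|^2 \le L(1+|y|^2)$ of Assumption \ref{l548}, and the inductive hypothesis produce the desired estimate. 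The Skorohod term is the essential one; for it I would invoke Corollary \ref{corr1} with $u(s) = \psi^{-1}(s)\sigma(X_{k-1}(s-\tau))$ on the interval $[k\tau,(k+1)\tau]$.

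Applying Corollary \ref{corr1} requires bounds both on $|\mathbb{E} u(s)|$, which follow from Cauchy--Schwarz, Proposition \ref{boundpsi}, the linear growth of $\sigma$, and the inductive hypothesis, and on $D_r^{a,b} u(s)$, which expands via the Leibniz rule as
\begin{equation*}
D_r^{a,b}\psi^{-1}(s)\cdot\sigma(X_{k-1}(s-\tau)) + \psi^{-1}(s)\sigma'(X_{k-1}(s-\tau))\,D_r^{a,b}X_{k-1}(s-\tau).
\end{equation*}
Control of the second summand demands an a priori bound on the $p$-moments of $\sup_{r,t}|D_r^{a,b}X_{k-1}(t)|$, so I would strengthen the induction hypothesis to propagate moment bounds on the solution and on its first Malliavin derivative in tandem; the derivative bound is closed using the recursive identities (\ref{d1k})--(\ref{d1kd}), the boundedness of $f'$ and $\sigma'$ by $K_2$, and the same circle of tools (Corollary \ref{corr1}, Theorem \ref{psi}, Proposition \ref{boundpsi}). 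The main obstacle is not any single sharp estimate but rather the careful bookkeeping required to verify that the constants remain finite after each iteration and depend only on $p, T, A, B, K_1, K_2, L$ and $C_{\xi,p}$; this is precisely why the finiteness of $K_0$ in Assumption \ref{l548} is indispensable.
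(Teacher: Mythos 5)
Your proposal is correct and follows essentially the same route as the paper: stepwise induction on $k$ over $[k\tau,(k+1)\tau]$, the representation $X_k(t)=e^{-B\int_0^t\int_0^s\phi(s,u)\,du\,ds}\psi(t)Z_k(t)$, Corollary \ref{corr1} applied to the Skorohod term $\int\psi^{-1}(s)\sigma(X_{k-1}(s-\tau))\,\delta^{a,b}B^{a,b}(s)$, and Proposition \ref{boundpsi} together with the linear growth of $f,\sigma$ for the remaining terms. Your explicit strengthening of the induction hypothesis to carry the $p$-moments of $D_r^{a,b}X_{k-1}$ in tandem is in fact needed here (the paper's bound on $N_k(k\tau)$ contains $\mathbb{E}\int\int|D_s^{a,b}X_{k-1}(r-\tau)|^{2p}$ but leaves its control implicit, deferring it to Theorem \ref{upperderi}), so on that point your write-up is the more careful one.
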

\begin{proof}
Proceeding the steps in the proof of Theorem \ref{exists}, induction on $k=0, \cdots, K_0$ and also the definition of $x(t)$ in Equation (\ref{defx}) show that it is sufficient to derive the uniformly boundedness of moments of processes $Z_k(.)$. To do this, for every $p \geq 2$,
from Assumption \ref{l548} and the fact $D_r\xi_0(s)=0$ for all $r \geq 0$ we obtain
\begin{align}
N_0(\tau) &:=\mathbb{E}\left(\int_{0}^{\tau}\left(\int_{0}^{\tau}\left|D_{s}^{a,b}\Big( \psi^{-1}(r)\sigma(\xi_0(r-\tau))\Big)\right|^{2/(a+b+1)}dr\right)^{\frac{p(a+b+1)}{p(a+b+1)-2}}ds\right)^{\frac{p(a+b+1)-2}{2}}\nonumber\\
&\leq \tau^2 \mathbb{E}\left(\int_{0}^{\tau}\int_{0}^{\tau}\left| D_{s}^{a,b}\psi^{-1}(s)\sigma(\xi_0(s-\tau))\right|^p drds \right)\nonumber\\
& \leq L^{\frac{p}{2}}\tau^2\mathbb{E}\left(\int_{0}^{\tau} \int_{0}^{\tau}\left|D_{s}^{a,b}\psi^{-1}(r)\right|^p(1+\xi_0(r-\tau))^{\frac{p}{2}} drds \right) \nonumber\\
& \leq  L^{\frac{p}{2}}\tau^2\mathbb{E}\left(\sup_{0 \leq r \leq \tau} (1+\xi_0(r-\tau))^{\frac{p}{2}}\int_{0}^{\tau} \int_{0}^{\tau}\left|D_{s}^{a,b}\psi^{-1}(r)\right|^p drds \right) \nonumber\\
&\leq 2^{p-2} L^{\frac{p}{2}}\tau^2\left(1+\mathbb{E}(\sup_{0 \leq r \leq \tau}\vert\xi_0(r-\tau)\vert^{p})\right)+\frac12  L^{\frac{p}{2}}\tau^4 \mathbb{E}\left( \int_{0}^{\tau} \int_{0}^{\tau}\left|D_{s}^{a,b}\psi^{-1}(r)\right|^{2p} drds \right), \label{n0}
\end{align}
Substituting $dM_0$ into the definition of $Z_0$ and then applying Corrollary \ref{corr1} and Jensen's inequality result
\begin{align}
\mathbb{E}\Big(\sup_{0 \leq t \leq \tau}\vert Z_0(t) \vert^p\Big) & \leq 2^p \Big\{\mathbb{E}(\vert \xi_0(0) \vert^p) + \mathbb{E}\Big(\sup_{0 \leq t \leq \tau}\vert \int_0^\tau \psi^{-1}(s)f(\xi_0(s-\tau))ds \vert^p \Big)\nonumber\\
&+ \mathbb{E}\Big(\sup_{0 \leq t \leq \tau}\vert \int_0^\tau \psi^{-1}(s)\sigma(\xi_0(s-\tau)) \delta^{a,b}B^{a,b}(s)\vert^p \Big)\Big\} \nonumber\\
& \leq 2^p \Big\{\mathbb{E}(\vert \xi_0(0) \vert^p) + \mathbb{E}\Big(\sup_{0 \leq t \leq \tau}\vert \psi^{-1}(s) \vert^p\vert \int_0^\tau f(\xi_0(s-\tau))ds \vert^p \Big)\nonumber\\
&+ C_1 \tau\bigg\{\left(\int_{0}^{\tau}\left|\mathbb{E}\Big( \psi^{-1}(s)\sigma(\xi_0(s-\tau))\Big)\right|^{\frac{2p}{p(a+b+1)-2}} ds\right)^{\frac{p(a+b+1)-2}{2}}+N_0(\tau)\bigg\}\Big\} \nonumber\\
& \leq  2^p \Big\{\mathbb{E}(\vert \xi_0(0) \vert^p)+ 2^{p-1} L^{p}\tau^{2p}\int_0^\tau \Big(1+\mathbb{E}(\vert\xi_0(s-\tau)\vert^{2p})\Big) ds +\frac12\mathbb{E}\left(\sup_{0 \leq s \leq \tau} \vert\psi^{-1}(s)\vert^{2p}\right) \nonumber\\
&+ C_1 \tau^2 \int_{0}^{\tau}\Big\vert\mathbb{E}\Big( \psi^{-1}(s)\sigma(\xi_0(s-\tau))\Big)\Big\vert^p ds +C_1\tau N_0(\tau)\Big\}\nonumber\\
&\leq 2^p \Big\{\mathbb{E}(\vert \xi_0(0) \vert^p)+ 2^{p-1} L^{p}\tau^{2p}\int_0^\tau \Big(1+\mathbb{E}(\vert \xi_0(s-\tau \vert^{p})\Big) ds +\frac12\mathbb{E}\left(\sup_{0 \leq s \leq \tau}\vert \psi^{-1}(s)\vert^{2p}\right) \nonumber\\
&+\frac12 C_1 \tau^2 \int_{0}^{\tau}\mathbb{E} (\vert\psi^{-1}(s)\vert^{2p}) ds+\frac12 C_1\tau^2 L^p 2^p \int_{0}^{\tau}\mathbb{E}\Big(  1+\vert \xi_0(s-\tau)\vert^{2p}\Big) ds +C_1\tau N_0(\tau)\Big\}, \label{z0}
\end{align}
where we used the fact $(z+y)^p \leq 2^p( z^p+y^p)$ several times in the above inequalities. Finally, substitue (\ref{n0}) in (\ref{z0}) and then use Assumption \ref{assum2} and Proposition \ref{boundpsi} to deduce that for every $0 \leq t \leq \tau$ the solution $x(t)=e^{-B\int_0^t \int_0^s \phi(s,u)du ds}\psi(t) Z_0(t)$ has uniformly bounded moments.\\
On replacing $\xi_0$ by  $X_k$ and repeating a similar computation recurcively on $k$, one derive that for every $k=1, \cdots, K_0-1$
\begin{align*}
N_k(k\tau) &:=\mathbb{E}\left(\int_{k\tau}^{(k+1)\tau}\left(\int_{k\tau}^{(k+1)\tau} D_s^{a,b}\Big(\psi^{-1}(r)\sigma(X_{k-1}(r-\tau))\Big)^{2/(a+b+1)}dr\right)^{\frac{p(a+b+1)}{p(a+b+1)-2}}ds\right)^{\frac{p(a+b+1)-2}{2}}\nonumber\\
&\leq \tau^2 \mathbb{E}\left(\int_{k\tau}^{(k+1)\tau}\int_{k\tau}^{(k+1)\tau}\left| \psi^{-1}(r)\sigma'(X_{k-1}(r-\tau))D_{s}^{a,b}X_{k-1}(r-\tau)+ D_{s}^{a,b}\psi^{-1}(r)\sigma(X_{k-1}(s-\tau))\right|^p dr ds\right)\nonumber\\
& \leq 2^{p-1}\tau^2 K_2^p\mathbb{E}\left(\sup_{k\tau \leq s \leq (k+1)\tau} \vert\psi^{-1}(s)\vert^{2p}\right) +2^{p-1}\tau^2 K_2^p\mathbb{E}\left(\int_{k\tau}^{(k+1)\tau}\int_{k\tau}^{(k+1)\tau}\left| D_{s}^{a,b}X_{k-1}(r-\tau)\right|^{2p} drds \right) \nonumber\\
&+2^{2p-1}L^p\tau^2\left(1+\mathbb{E}(\sup_{k\tau \leq r \leq (k+1)\tau}\vert X_{k-1}(r-\tau)\vert^{p})\right)+2^{p-1}\tau^2 \mathbb{E}\left( \int_{k\tau}^{(k+1)\tau}\int_{k\tau}^{(k+1)\tau}\left|D_{s}^{a,b}\psi^{-1}(r)\right|^{2p} drds \right), 
\end{align*}
and also 
\begin{align*}
\mathbb{E}\Big(\sup_{k\tau \leq t \leq (k+1)\tau}\vert Z_k(t) \vert^p\Big) 
& \leq 2^p \Big\{\mathbb{E}(\vert X_{k-1}(k\tau) \vert^p)+ 2^{p-1} L^{p}\tau^{2p}\int_{k\tau}^{(k+1)\tau} \Big(1+\mathbb{E}(\vert X_{k-1}(s-\tau)\vert^{p})\Big) ds \nonumber\\
&+\frac12\mathbb{E}\left(\sup_{k\tau \leq s \leq (k+1)\tau}\vert \psi^{-1}(s)\vert^{2p}\right) 
+\frac12 C_1 \tau^2  \int_{k\tau}^{(k+1)\tau}\mathbb{E} (\vert\psi^{-1}(s)\vert^{2p}) ds \nonumber\\
&+\frac12 C_1\tau^2 L^p 2^p \int_{k\tau}^{(k+1)\tau}\mathbb{E}\Big(  1+\vert X_{k-1}(s-\tau)\vert^{2p}\Big) ds +C_1\tau N_k(k\tau)\Big\}.
\end{align*}
Hence, Proposition \ref{boundpsi} and induction on $k$ deduce the boundedness of moments of $x(.)$ in $[0, K_0\tau]$ and finally by repeating this computation for every $K_0\tau  \leq t \leq T$, the claim can be obtained. 
\end{proof}
In sequence, since the following computations and results can be exactly repeat for every $K_0 \tau \leq t \leq T$ and $
t -\tau  \leq r \leq t$, we just demonstrate the results on $0 \leq t \leq K_0\tau$ and $0 \leq r \leq t-\tau$  as follows.\\
From the definition of the functions $F_{r_l\cdots r_1}(X_k,t) $ and  $G_{r_l\cdots r_1}(X_k,t) $, we understand that these processes  depend on higher derivatives of the functions $f$ and $\sigma$, respectively, and Malliavin derivatives of $X_{k-1}$ up to the order $l$. Then
$$\vert F_{r_l\cdots r_1}(X_{k-1},t) \vert^p=\mathcal{P}_1\left(f'(X_{k-1}), \cdots, f^{(l)}(X_{k-1}), D_{r_1}^{a,b}X_{k-1}, \cdots, D^{a,b}_{r_l}\ldots D^{a,b}_{r_1} X_{k-1}\right)(t),$$
$$\vert G_{r_l\cdots r_1}(X_{k-1},t) \vert^p=\mathcal{P}_2\left(\sigma'(X_{k-1}), \cdots, \sigma^{(l)}(X_{k-1}), D_{r_1}^{a,b}X_{k-1}, \cdots, D^{a,b}_{r_l}\ldots D^{a,b}_{r_1} X_{k-1}\right)(t),$$
where $\mathcal{P}_1$ and $\mathcal{P}_2$ are polynomial functions. From Assumption \ref{l548}  we derive that for every $p \geq 2$, $0 \leq r_1, \cdots r_l \leq T$ and $0 \leq t_1 \leq t_2 \leq T$
\begin{equation}\label{boundF}
\mathbb{E}\Big(\sup_{t_1 \leq t \leq t_2} \vert  F_{r_l\cdots r_1}(X_k,t) \vert^p\Big)+\mathbb{E}\Big(\sup_{t_1 \leq t \leq t_2} \vert  G_{r_l\cdots r_1}(X_k,t) \vert^p\Big)  \leq 2^{lp}K_2\sum_{j=2}^l  \mathbb{E}\Big(\sup_{t_1 \leq t \leq t_2}\vert  D_{r_j}^{a,b}\cdots D^{a,b}_{r_1}X_{k-1}(t-\tau) \vert^{q'_j} \Big)
\end{equation} 
where integer numbers $q'_j$ satisfy $1 \leq q'_j \leq p$. 
Furthermore, when $l< 2M$, for every $0 \leq r\leq t-\tau$
\begin{equation}\label{GGG}
\mathbb{E}\Big(\sup_{t_1 \leq t \leq t_2} \vert D^{a,b}_r G_{r_l\cdots r_1}(X_k,t) \vert^p\Big)  \leq  2^{lp}K_2\sum_{j=1}^{l+1}  \mathbb{E}\Big(\sup_{t_1 \leq t \leq t_2}\vert  D_{r_j}^{a,b}\cdots D^{a,b}_{r_1}X_{k-1}(t-\tau) \vert^{q_{j,G}} \Big)
\end{equation}
in which integer numbers $q_{j,G}$ satisfy $1 \leq q_{j,G} \leq p$. Define the vectors 
\begin{align*}
V_i &:=\Big(\sigma^{(i)}(X_{k-1}(r_1-\tau)), \cdots, \sigma^{(i)}(X_{k-1}(r_l-\tau))\Big), \\
W_{j,1}& :=\Big( D_{r_j}^{a,b}\cdots D^{a,b}_{r_2}X_{k}(r_1), \cdots,  D_{r_j}^{a,b}\cdots D^{a,b}_{r_2}X_{k}(r_l)\Big),\nonumber\\
W_{j,2}&:=\Big(D_{r_j}^{a,b}\cdots D^{a,b}_{r_2}X_{k-1}(r_1-\tau), \cdots,  D_{r_j}^{a,b}\cdots D^{a,b}_{r_2}X_{k-1}(r_l-\tau)\Big)\\
w_{j,j'}&:=\Big( D_{r_{j'}}^{a,b}\cdots D^{a,b}_{r_{j+1}}D_{r_{j-1}}^{a,b}\cdots D^{a,b}_{r_1}X_{k-1}(r_1-\tau), \cdots, D_{r_{j'}}^{a,b}\cdots D^{a,b}_{r_{j+1}}D_{r_{j-1}}^{a,b}\cdots D^{a,b}_{r_1}X_{k-1}(r_l-\tau)\Big),\\
w^{(1)}_{j,j'}&:=\Big( D_{r_{j'}}^{a,b}\cdots D^{a,b}_{r_{j+1}}D_{r_{j-1}}^{a,b}\cdots D^{a,b}_{r_1}X_{k}(r_1-\tau), \cdots, D_{r_{j'}}^{a,b}\cdots D^{a,b}_{r_{j+1}}D_{r_{j-1}}^{a,b}\cdots D^{a,b}_{r_1}X_{k}(r_l-\tau)\Big),\\
v_{j''}& :=\Big( D_{r_{j''}}^{a,b}\cdots D^{a,b}_{r_1}X_{k-1}(r_1-\tau), \cdots,  D_{r_{j''}}^{a,b}\cdots D^{a,b}_{r_1}X_{k-1}(r_l-\tau)\Big),\\
v^{(1)}_{j''}& :=\Big( D_{r_{j''}}^{a,b}\cdots D^{a,b}_{r_1}X_{k}(r_1-\tau), \cdots,  D_{r_{j''}}^{a,b}\cdots D^{a,b}_{r_1}X_{k}(r_l-\tau)\Big),\nonumber
\end{align*}
for every $1 \leq i \leq l$, $2 \leq j \leq l$, $3 \leq j' \leq l$ and  $1 \leq j'' \leq l-1$. Therefore $\vert H_{r_l\cdots r_1}\vert^p$ would be a polynomial composition of the elements of these vectors; i.e., there exists some polynomial $\mathcal{P}_3$  such that 
\begin{equation*}
\vert H_{r_l\cdots r_1}\vert^p=\mathcal{P}_3\left(V_1, \cdots, V_l, W_{2,1}, W_{2,2} \cdots, W_{l,1}, W_{l,2}, w_{2,3}, w^{(1)}_{2,3}, \cdots, w_{l,l}, w^{(1)}_{l,l}, v_1, v^{(1)}_1, \cdots, v_{l-1}, v^{(1)}_{l-1}\right).
\end{equation*} 
We again conclude from Assumption \ref{l548} that for every $p \geq 2$ there exists some constant $c_{p,H_1}$ such that
\begin{align}\label{boundH}
\mathbb{E}\Big(\vert  H_{r_l\cdots r_1} \vert^p\Big) \leq c_{p,H_1} \Big(& \sum_{i=1}^{l}  \mathbb{E}(\vert V_i \vert^{q_{i}})+\sum_{j=1}^{l-1} \mathbb{E}(\vert v_{j''} \vert^{q_{j''}}+\vert v^{(1)}_{j''} \vert^{q_{j''}})+\sum_{j'=3}^{l}\sum_{j=2}^{l} \Big\{ \mathbb{E}(\vert W_{j,1} \vert^{q_{j,1}})+\mathbb{E}(\vert W_{j,2} \vert^{q_{j,2}})\Big\}  \nonumber\\
& +\sum_{j'=3}^{l}\sum_{j=2}^{l} \Big\{ \mathbb{E}(\vert w_{j,j'} \vert^{q_{j,j'}}+\vert w^{(1)}_{j,j'} \vert^{q_{j,j'}})\Big\}\Big),
\end{align}
 where all integer numbers $q_{j}, q_{j,1}, q_{j,2}, q_{j,j'}$ and $q_{j''}$ are between 1 and $p$. \\
It is worth mentioning that the order of differentiability in components of vectors $V_i, W_{j,1}, W_{j,2}, w_{j,j'},  w^{(1)}_{j,j'}$ and $v_{j'}, v^{(1)}_{j'}$ are utmost of the order $l-1$. This fact helps us to find some upper bounds for the moments of $D^{a,b}_{u_l}..D^{a,b}_{u_1}x(.)$ that will be demonstrated in Theorem \ref{upperderi}.\\
 Now, we are ready to prove some upper bounds for the moments of higher derivatives of the solution to SDDE (\ref{s1}). 
\begin{theorem}\label{upperderi}
 Under Assumptions \ref{l548} and \ref{assum2}, for every $p \geq 2$ and $1 \leq l \leq M$ there exists some positive constant $C'_{l,p}$ such that $l$-th  Malliavin derivative of the solution to SDE (\ref{s1}) satisfies the following inequlity.
\begin{equation}\label{ssd}
\mathbb{E}\Big(\sup_{0 \leq t \leq T}\sup_{0 \leq u_1, \cdots,  u_l \leq t} \vert D^{a,b}_{u_l}..D^{a,b}_{u_1} x(t)\vert^p  \Big) \leq C'_{l,p}
\end{equation}
\end{theorem}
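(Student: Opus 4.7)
The natural plan is a double induction on the derivative order $l$ and the interval index $k$, paralleling the step-by-step construction of the solution in Theorem \ref{exists}. The base case $l=0$ is precisely (\ref{ss}). Fix $l \in \{1,\ldots,M\}$ and suppose the claim holds for all strictly smaller orders on every interval; the goal for the inductive step is to bound $\mathbb{E}\big(\sup_{t \in [k\tau,(k+1)\tau]} \sup_{r_1,\ldots,r_l \le t} |D^{a,b}_{r_l}\cdots D^{a,b}_{r_1} X_k(t)|^p\big)$ for each $k = 0,1,\ldots,K_0$ by an inner induction on $k$.

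\textbf{Reduction via the $\psi$-transform.} On $[k\tau,(k+1)\tau]$, the explicit representation from the proof of Theorem \ref{exists} gives $D^{a,b}_{r_l}\cdots D^{a,b}_{r_1} X_k(t) = e^{-B\int_0^t\int_0^s \phi(s,u)du\,ds}\,\psi(t)\,Z_{k,r_1,\ldots,r_l}(t)$. The deterministic exponential prefactor is uniformly bounded on $[0,T]$, and $\psi$ has uniformly bounded $p$-moments by Theorem \ref{psi}, so H\"older's inequality reduces the problem to controlling $\mathbb{E}(\sup_t |Z_{k,r_1,\ldots,r_l}(t)|^{2p})$. The process $Z_{k,r_1,\ldots,r_l}$ decomposes into three parts: the boundary term $D^{a,b}_{r_l}\cdots D^{a,b}_{r_1} X_{k-1}(k\tau)$, handled by the inner induction on $k$; the term $H_{r_l\cdots r_1}(k)$, controlled by (\ref{boundH}), which expresses $|H|^p$ as a polynomial in Malliavin derivatives of $X_k$ and $X_{k-1}$ of order strictly less than $l$, all bounded by the outer induction hypothesis together with Proposition \ref{boundpsi}; and the stochastic convolution $\int \psi^{-1}(s)\, dM_{k-1,r_1,\ldots,r_l}(s)$.

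\textbf{The Skorokhod integral and the main obstacle.} The Lebesgue and $\phi$-correction contributions of the stochastic convolution are dispatched by (\ref{boundF}), Jensen's inequality, and Proposition \ref{boundpsi}. For the genuine Skorokhod part $\int \psi^{-1}(s)\,G_{r_l\cdots r_1}(X_{k-1},s)\,\delta^{a,b}B^{a,b}(s)$, I would apply Corollary \ref{corr1}, which requires an $L^p$-estimate on the integrand and on its Malliavin derivative. The $L^p$-estimate of the integrand follows from (\ref{boundF}) and the induction hypothesis. The Malliavin-derivative estimate, however, is the delicate point: via the Leibniz rule it couples $D^{a,b}_s \psi^{-1}$ (bounded by Proposition \ref{boundpsi}) with $D^{a,b}_s G_{r_l\cdots r_1}(X_{k-1},\cdot)$, and the bound (\ref{GGG}) for the latter involves Malliavin derivatives of $X_{k-1}$ \emph{up to order $l+1$}, one order above the current induction level. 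This is resolved by the step-by-step nature of the construction: on the preceding interval $[(k-1)\tau,k\tau]$ the process $X_{k-1}$ is already built, and its $(l+1)$-th derivative is itself governed by (\ref{dddl}) with $k$ replaced by $k-1$, where the Malliavin derivative of the integrand now only brings in $X_{k-2}$. Iterating this peeling backwards terminates at the deterministic initial datum $\xi_0$, which by Assumption \ref{assum2} is Malliavin differentiable up to order $2M+1$ and whose contribution is bounded directly by Corollary \ref{l54} since the coefficient processes on $[0,\tau]$ are not state-dependent. The arithmetic assumption $K_0 + 1 < 2M$, together with $l \le M$, is what keeps the required order $l+k$ below the available smoothness $2M$ of $f,\sigma$ during this backward recursion.

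\textbf{Closing via Gronwall.} Setting $\Phi_k^l(t) := \mathbb{E}\big(\sup_{k\tau \le s \le t}\sup_{r_1,\ldots,r_l \le s} |D^{a,b}_{r_l}\cdots D^{a,b}_{r_1} X_k(s)|^p\big)$ and assembling the estimates above yields a linear Gronwall inequality of the form $\Phi_k^l(t) \le C + C'\int_{k\tau}^t \Phi_k^l(s)\,ds$, with $C, C'$ depending on $p,\tau,T,K_0,L,K_1,K_2$, on the constants from Theorem \ref{psi} and Proposition \ref{boundpsi}, and on the outer/inner induction hypotheses. Gronwall's lemma produces a finite bound $C'_{k,l,p}$; maximising over $k \in \{0,\ldots,K_0\}$ and repeating the same computation once more for $t \in [K_0\tau, T]$ with $0 \le r_j \le t-\tau$ and separately for $t-\tau < r_j \le t$ via the analogous equation (\ref{d1kd}) yields the desired constant $C'_{l,p}$ in (\ref{ssd}).
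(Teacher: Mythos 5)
Your proposal follows essentially the same route as the paper: the $\psi$-transform reduction to the explicit process $Z_{k,r_1,\ldots,r_l}$, the maximal inequality of Corollary \ref{corr1}, the bounds (\ref{boundF}), (\ref{GGG}) and (\ref{boundH}), and, crucially, the observation that the order-$(l+1)$ derivative of $X_{k-1}$ entering through $D^{a,b}_s G_{r_l\cdots r_1}$ is absorbed by losing one derivative order per delay interval, which is exactly the paper's induction on $k$ exploiting $K_0+1<2M$ and the $2M+1$-fold differentiability of $\xi_0$. The only cosmetic difference is your closing Gronwall step, which is superfluous once $Z_{k,r_1,\ldots,r_l}$ is written explicitly as in the paper's estimate (\ref{zzzp}), already closed without it.
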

\begin{proof}
 We show the assertion by induction on $k$. We first take the following computation for every $k=0, \cdots, K_0$ and $1 \leq l  <  2M$. Assume $\sup_{0 \leq u_1, \cdots,  u_l \leq t} \vert D^{a,b}_{u_l}..D^{a,b}_{u_1} X_{k-1}(t)\vert^p$ is achieved in $u_1=r_1, \cdots , u_l=r_1$, which we omitted dependency of $r_i$s' to $t$ for simplicity without loss of generality. From Equations (\ref{GGG})  and (\ref{boundF})
\begin{align}
2^{-p+1} DN_{k,r_1,...,r_l}& (k\tau) := 2^{-p+1}\mathbb{E}\left(\int_{k\tau}^{(k+1)\tau}\left(\int_{k\tau}^{(k+1)\tau} \vert D_r^{a,b}\Big(\psi^{-1}(s)G_{r_l\cdots r_1}(X_{k-1},s)\Big)\vert^{2/(a+b+1)}ds\right)^{\frac{p(a+b+1)}{p(a+b+1)-2}}dr\right)^{\frac{p(a+b+1)-2}{2}}\nonumber\\
& \leq \tau^4 \mathbb{E}\left(\sup_{0 \leq s \leq T} \vert \psi^{-1}(s)\vert^{2p}\right) +\tau^2 \mathbb{E}\left(\int_{k\tau}^{(k+1)\tau}\int_{k\tau}^{(k+1)\tau}\left| D_{r}^{a,b}G_{r_l\cdots r_1}(X_{k-1},s)\right|^{2p} drds \right) \nonumber\\
&+ \tau^2\mathbb{E}\left(\int_{k\tau}^{(k+1)\tau}\int_{k\tau}^{(k+1)\tau} \vert D_{r}^{a,b}\psi^{-1}(s) \vert^{2p} drds\right) +\tau^2 \mathbb{E}\left(\int_{k\tau}^{(k+1)\tau}\int_{k\tau}^{(k+1)\tau}\left| G_{r_l\cdots r_1}(X_{k-1},s)\right|^{2p} drds \right) \nonumber\\
& \leq \tau^4 \mathbb{E}\left(\sup_{0 \leq s \leq T} \vert \psi^{-1}(s)\vert^{2p} \right)+\tau^2\mathbb{E}\left(\int_{k\tau}^{(k+1)\tau}\int_{k\tau}^{(k+1)\tau}  D_{r}^{a,b}\vert \psi^{-1}(s)\vert^{2p}  drds\right)  \nonumber\\
&+2\tau^2 2^{2lp}K_2^2\sum_{j=1}^{l+1}  \mathbb{E}\Big(\sup_{k\tau \leq s \leq (k+1)\tau}\vert  D_{r_j}^{a,b}\cdots D^{a,b}_{r_1}X_{k-1} (s-\tau)\vert^{2q_{j,G}}\Big). \label{ddnn}
\end{align}
On the other hand, from Theorem \ref{l0} 
\begin{align}
\mathbb{E}\Big(\sup_{k\tau \leq t \leq (k+1)\tau}  \vert Z_{k,r_1,...,r_l}(t) \vert^p\Big)
& \leq 3^p \bigg\{2^p\mathbb{E}(\vert D^{a,b}_{r_l}\cdots D^{a,b}_{r_1}X_{k-1}(k\tau)\vert^p+ 2^p\mathbb{E}(\vert  H_{r_l\cdots r_1}(k)\vert^p)\nonumber\\
&+\frac12 \mathbb{E}\left(\int_{k\tau}^{(k+1)\tau}  \vert\psi^{-1}(s)\vert^{2p}  \right)+\frac12 \mathbb{E}\left(\int_{k\tau}^{(k+1)\tau}  \vert  F_{r_l\cdots r_1}(X_{k-1},s) \vert^{2p} ds \right) \nonumber\\
&+ \mathbb{E}\left(\sup_{k\tau \leq t \leq (k+1)\tau} \vert \int_{k\tau}^{t}   \psi^{-1}(s)G_{r_l\cdots r_1}(X_{k-1},s)  \delta^{a,b}B^{a,b}(s) \vert^{p} \right)\bigg\}\nonumber\\
& \leq 3^p \bigg\{2^p\mathbb{E}(\vert D^{a,b}_{r_l}\cdots D^{a,b}_{r_1}X_{k-1}(k\tau)\vert^p+ 2^p\mathbb{E}(\vert  H_{r_l\cdots r_1}(k)\vert^p)\nonumber\\
&+\frac12 \tau \mathbb{E}\Big(\sup_{k\tau \leq s \leq (k+1)\tau} \vert  F_{r_l\cdots r_1}(X_{k-1},s) \vert^{2p}  \Big) 
+ \frac12(1+ C_1 \tau) \mathbb{E}\left(\int_{k\tau}^{(k+1)\tau}  \vert\psi^{-1}(s)\vert^{2p} ds \right) \nonumber\\
& + \frac12 C_1 \tau \int_{k\tau}^{(k+1)\tau}  \mathbb{E} (\vert\ G_{r_l\cdots r_1}(X_{k-1},s)  \vert^{2p} ) + C_1 \tau DN_{k,r_1,...,r_l}(k\tau)  \bigg\}.\label{zzzp}
\end{align}
Now, sustitute Equations (\ref{boundH}), (\ref{boundF}) and (\ref{ddnn}) in (\ref{zzzp})  and result
\begin{align}
\mathbb{E}\Big(\sup_{k\tau \leq t \leq (k+1)\tau}& \vert Z_{k,r_1,...,r_l}(t) \vert^p\Big) \nonumber \\
&\leq  3^p \bigg\{2^p\mathbb{E}(\vert D^{a,b}_{r_l}\cdots D^{a,b}_{r_1}X_{k-1}(k\tau)\vert^p )+2^p c_{p,H_1} \sum_{j'=3}^{l}\sum_{j=2}^{l} \Big\{ \mathbb{E}(\vert w_{j,j'} \vert^{q_{j,j'}}+\vert w^{(1)}_{j,j'} \vert^{q_{j,j'}})\Big\}\nonumber\\
&+2^p c_{p,H_1} \left(\sum_{i=1}^{l}  \mathbb{E}(\vert V_i \vert^{q_{i}})+\sum_{j=1}^{l-1} \mathbb{E}(\vert v_{j''} \vert^{q_{j''}}+\vert v^{(1)}_{j''} \vert^{q_{j''}})+\sum_{j'=3}^{l}\sum_{j=2}^{l} \Big\{ \mathbb{E}(\vert W_{j,1} \vert^{q_{j,1}})+\mathbb{E}(\vert W_{j,2} \vert^{q_{j,2}})\Big\}\right)\nonumber\\
&+\frac12\tau (1+C_1\tau) \mathbb{E}\left(\sup_{0 \leq s \leq T}\vert \psi^{-1}(s)\vert^{2p}\right)\nonumber\\
&+2^{2lp}\frac12 (1+C_1\tau)\tau K_2^2\sum_{j=2}^{l}  \mathbb{E}\Big(\sup_{k\tau \leq t \leq (k+1)\tau}\vert  D_{r_j}^{a,b}\cdots D^{a,b}_{r_2}X_{k-1}(t) \vert^{q_j} \Big) \nonumber\\
&+2^{p}C_1\tau^5\mathbb{E}\left(\sup_{0 \leq s \leq T}\vert \psi^{-1}(s)\vert^{2p}\right) +2^p C_1\tau^3 \mathbb{E}\left(\int_{k\tau}^{(k+1)\tau}\int_{k\tau}^{(k+1)\tau} \vert D_{r}^{a,b}\psi^{-1}(s)\vert^{2p} drds\right)  \nonumber\\
&+2^{p+1}C_1\tau^3 2^{2lp}K_2^2\sum_{j=2}^{l+1}  \mathbb{E}\Big(\sup_{k\tau \leq s \leq (k+1)\tau}\vert  D_{r_j}^{a,b}\cdots D^{a,b}_{r_1}X_{k-1} (s-\tau)\vert^{2q_{j,G}}\Big)\label{z1l}
\end{align}
Here it is crucial to mention that in this computation the order of deriatives for all terms in the right hand side of Equation (\ref{z1l}) are utmost of the order $l$, except of the last term which involves $ D_{r_{l+1}}^{a,b}\cdots D^{a,b}_{r_1}X_{k-1} (s-\tau)$. So, we first assume that $k=0$. Applying Assumption \ref{assum2}, Proposition \ref{boundpsi} in Equation (\ref{z1l}) and induction on $l$ deduce that the process $X_1$ is $2M$-times differentiable and their derivatives have bounded $p$-moments. Following by induction on $k$ to result that the process $X_{k+1}$ has Malliavin derivatives of one order less than those of $X_k$ and thier derivatives have uniformly bounded moments. Since $K_0+1 < 2M$, it is sufficient to proceed previous stage $M$ times to obtain the assertion.   
\end{proof}
%%%%%%%%%%%%%%%%%%%%%%%%%%%%
\section{Regularity of the density}
To achieve the reqularity of the density of solution to SDDE (\ref{s1}), we assume that the functions $f$ and $\sigma$ are infinitely differentiable with bounded derivatives; in fact $M=\infty$ in Assumption \ref{l548}. Also we consider the following hypothesis.\\
{\bf Hopothesis H}: There exists some constant $M_0$ such that $\vert \sigma(x) \vert > M_0$ for all $x$.
\begin{theorem}
Under Hypothsis H and assumption $M=\infty$, for every $t \in [0,T]$ the solution of the SDDE (\ref{s1}) has an infinitely differentiable density with respect to Lebesgue's measure on $\mathbb{R}$.
\end{theorem}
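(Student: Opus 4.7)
The plan is to invoke the standard Malliavin--Nualart criterion for smoothness of densities: a real-valued random variable $F \in \mathbb{D}^{\infty}$ admits a $C^\infty$ density with respect to Lebesgue measure on $\mathbb{R}$ as soon as its Malliavin covariance $\gamma_F := \|D^{a,b}F\|_{\textrm{H}}^2$ satisfies $\gamma_F^{-1} \in L^p(\Omega)$ for every $p \geq 1$. Applying this with $F = x(t)$, the membership $x(t) \in \mathbb{D}^{\infty}$ is already delivered by Theorems \ref{exists} and \ref{upperderi} once $M$ is taken to be $\infty$ in Assumption \ref{l548}, since those results yield uniformly bounded $L^p$--moments for every iterated Malliavin derivative of $x(t)$. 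The whole problem therefore reduces to verifying $\gamma_t^{-1} \in \bigcap_{p\geq 1} L^p(\Omega)$.

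For the lower bound on $\gamma_t$, the plan is to localise on a short window $[t-\varepsilon, t]$ with $\varepsilon$ smaller than both $\tau$ and $t-k\tau$, where $k$ is the integer such that $t \in [k\tau,(k+1)\tau]$, and to expand $D_r^{a,b} x(t)$ for $r$ in that window. The Skorohod commutation relation applied to the noise coefficient $B x(s)+\sigma(x(s-\tau))$ on $[k\tau,t]$ gives, for $r \in (t-\varepsilon,t]$, a decomposition
\[
D_r^{a,b}x(t) \;=\; \sigma\bigl(x(r-\tau)\bigr) \;+\; B\,x(r) \;+\; R_r^{\varepsilon},
\]
where $R_r^{\varepsilon}$ solves the linear Skorohod equation of type (\ref{d1kd}) on $[r,t]$ with zero initial datum and absorbs the drift and diffusion integrals over $[r,t]$. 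Hypothesis H gives the pointwise bound $\sigma(x(r-\tau))^2 > M_0^2$, while the two-sided estimate (\ref{bfbm}) implies $\int_{t-\varepsilon}^{t}\!\int_{t-\varepsilon}^{t}\phi(r,s)\,dr\,ds \gtrsim t^a\,\varepsilon^{\,b+1}$. Inserting the decomposition into $\gamma_t = \int_0^T\!\!\int_0^T D_r^{a,b}x(t)\, D_s^{a,b}x(t)\,\phi(r,s)\,dr\,ds$ and restricting to $[t-\varepsilon,t]^2$ produces a lower bound of order $M_0^2\, t^a\, \varepsilon^{\,b+1}$, to be competed against a remainder $\Xi(\varepsilon)$ coming from cross-integrals involving $Bx(r)$ and $R_r^{\varepsilon}$.

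The hard part will be controlling $\Xi(\varepsilon)$. Using Corollary \ref{corr1} on $[t-\varepsilon,t]$ for the linear Skorohod equation satisfied by $R_r^{\varepsilon}$, together with Proposition \ref{boundpsi} for the fundamental process $\psi$, the moment bound (\ref{ss}) on $x(\cdot)$ and the recursive estimates (\ref{ssd}) on iterated derivatives, I expect a polynomial-in-$\varepsilon$ control of the form
\[
\mathbb{E}\Bigl[\sup_{r\in[t-\varepsilon,t]}|R_r^{\varepsilon}|^p\Bigr] \;\leq\; C_p\,\varepsilon^{\,p(b+1)/2+\kappa p},
\]
for some $\kappa>0$ independent of $p$, with a similar H\"older-type estimate on $r \mapsto B(x(r)-x(t))$ taking care of the affine cross-term. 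Feeding these into the lower bound and running a Chebyshev argument on the event $\{\gamma_t < \eta\}$, with $\varepsilon$ chosen as a suitable power of $\eta$ and $p$ optimised, yields $\mathbb{P}(\gamma_t < \eta) \leq C_q\,\eta^q$ for every $q \geq 1$, equivalently $\gamma_t^{-1} \in \bigcap_p L^p(\Omega)$. Combined with $x(t) \in \mathbb{D}^\infty$, the Malliavin--Nualart criterion then delivers the claimed $C^\infty$ density. A bookkeeping caveat is that the whole construction must be carried out inside the subinterval $[k\tau,(k+1)\tau]$ via the representation $x(t) = e^{-B\int_0^t\int_0^s \phi(s,u)du\,ds}\psi(t)Z_k(t)$ from the proof of Theorem \ref{exists}, but the uniform moment bounds on $\psi$ and $\psi^{-1}$ make the propagation of constants through the $k$-induction routine.
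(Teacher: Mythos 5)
Your overall architecture is the one the paper uses: reduce the claim to $x(t)\in\mathbb{D}^{\infty}$ (Theorems \ref{exists} and \ref{upperderi}) plus negative moments of the Malliavin covariance, and obtain the latter from a small-ball estimate $P(\gamma_t<\epsilon_1)\le\epsilon_1^{p}$ by localizing the derivative on a short window ending at $t$, extracting the nondegenerate $\sigma$ term via Hypothesis H, and disposing of the $Bx$ contribution and of the fundamental solution's fluctuations by Chebyshev together with (\ref{ss}) and Proposition \ref{boundpsi}. Two implementation choices differ. First, you take the covariance to be $\|D^{a,b}x(t)\|_{\textrm{H}}^{2}$ and therefore need the lower bound $\int\!\!\int\phi\gtrsim t^{a}\varepsilon^{b+1}$ on the window (which does follow from (\ref{inner}) and (\ref{bfbm})); the paper works with $\int_0^T|D_ux(t)|^{2}du$ instead. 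Your choice is the canonically correct one for the isonormal process on $\textrm{H}$, and since (\ref{sub1}) gives $\|\cdot\|_{\textrm{H}}\lesssim\|\cdot\|_{L^{2}}$, a small-ball bound for the $L^{2}$ norm does not transfer to the $\textrm{H}$ norm, so your extra step is genuinely needed rather than optional. Second, the paper carries no remainder $R_r^{\varepsilon}$: it uses the exact representation $D_ux(t)=\psi(t)\psi(u)^{-1}\bigl(Bx(u)+\sigma(\cdot)\bigr)$ obtained by solving the linear equations (\ref{der})--(\ref{der2}) with the fundamental solution $\psi$, so the only error events are $\{\inf\psi\le\delta\}$ and $\{\sup\psi\ge\delta^{-1}\}$. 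Your route obliges you to prove the $L^{p}$ bound on $\sup_r|R_r^{\varepsilon}|$, which you only assert; adopting the $\psi$-representation would let you skip that estimate entirely.

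The genuine gap is the placement of the window. You take $r\in(t-\varepsilon,t]$ with $\varepsilon<\tau$, so $r>t-\tau$, and in that regime the derivative is governed by (\ref{der2}),
\begin{equation*}
D_r^{a,b}x(t)=Bx(r)+\int_r^tA\,D_r^{a,b}x(s)\,ds+\int_r^tB\,D_r^{a,b}x(s)\,\delta^{a,b}B^{a,b}(s),
\end{equation*}
which contains no $\sigma(x(r-\tau))$ term: that term enters (\ref{der}) only through the indicator $1_{k\tau\le r\le t-\tau}$, i.e.\ only for $r\le t-\tau$. Hence the decomposition $D_r^{a,b}x(t)=\sigma(x(r-\tau))+Bx(r)+R_r^{\varepsilon}$ is false on your chosen window, and Hypothesis H, which constrains $\sigma$ and not $B$, gives you nothing there; if $B$ is small or zero the covariance restricted to $(t-\tau,t]$ can be arbitrarily degenerate. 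The window must sit inside $[0,t-\tau]$, for instance $[t-\tau-\varepsilon,\,t-\tau]$, where (\ref{der}) does contribute $\sigma(x(r-\tau))$ bounded below by $M_0$ (the case $t\le\tau$ being handled through the initial segment $\xi_0$). Once the window is moved, the remaining ingredients of your plan --- the $\phi$ lower bound, the Chebyshev treatment of the cross terms, and the optimization of exponents --- go through along the same lines as the paper's argument.
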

\begin{proof}
Fixed $t \in [0,T]$. Theorem \ref{exists} guarantees the infinitly Malliavin diffrentiability of the solution $x(t)$ as $M=\infty$. To prove the second part, we have to show that $\mathbb{E}\Big(\int_0^T \vert D_uX(t)\vert^2 du\Big) < \infty$.  By using Malliavin's criterion it is sufficient to check that for every $p \geq 1$ there exists some $\epsilon_0 > 0$ such that for all $\epsilon_1 \leq \epsilon_0$; 
\begin{equation*}
P\Big(\int_0^T \vert D_u X(t) \vert^2 du < \epsilon_1 \Big) < \epsilon_1^p
\end{equation*}
To this end, we know 
\begin{align*}
P\Big(\int_0^T & \vert D_u X(t) \vert^2 du < \epsilon_1 \Big) \\
& \leq P\Big(\int_{t-\epsilon_1^\theta}^{t} \vert \psi(t) \vert^2\vert \psi(u) \vert^{-2} \vert Bx(u) + \sigma(x(u-\tau)1_{r > \tau}\vert^2  du < \epsilon_1 \Big) \\
&\leq P(A_0, \inf_{0 \leq t \leq T} \psi(r) > \delta, \sup_{0 \leq t \leq T} \psi(r) <  \delta^{-1} ) 
+  P(\inf_{0 \leq t \leq T} \psi(r) \leq  \delta)+P(\sup_{0 \leq t \leq T} \psi(r) \geq \delta^{-1})\\
&:=P_{1,\epsilon_1}+P_{2,\epsilon_1}+P_{3,\epsilon_1},
\end{align*}
with $A_0=\left\{ \int_{t-\epsilon_1^\theta}^{t} \vert \psi(t) \vert^2\vert \psi(u) \vert^{-2} \vert Bx(u) + \sigma(x(u-\tau)1_{r > \tau}\vert^2  du < \epsilon_1 \right\}$.
Let $\delta=\epsilon_1^{\frac18}$ and apply Chebyshev's inequality to show that $P_{3,\epsilon_1} \leq \epsilon_1^p$. Indeed, 
\begin{equation*}
P_{3,\epsilon_1} \leq \delta^{8p}\mathbb{E}\Big( \sup_{0 \leq t \leq T} \vert \psi(t) \vert^{8p} \Big) \leq \delta^{8p}C_{8p}.
\end{equation*}
From Proposition \ref{boundpsi}, the  infimum part being a simple modification of this argument for $\psi^{-1}(.)$ and therefore $P_{2,\epsilon_1} \leq \epsilon_1^p$.
 On the other hand, 
\begin{align*}
P_{1,\epsilon_1}& \leq  P\Big(\int_{t-\epsilon_1^\theta}^{t}\vert Bx(u) + \sigma(x(u-\tau)1_{r > \tau}\vert^2  du < \epsilon_1\delta^{-4} \Big) \\
& \leq  P\Big(\int_{t-\epsilon_1^\theta}^{t}\vert Bx(u) + \sigma(x(u-\tau)1_{r > \tau})\vert^2  du < \epsilon_1\delta^{-4}, \int_{t-\epsilon_1^\theta}^{t} \vert Bx(u) \vert \leq \epsilon_1^\gamma \Big) \\
&+  P\Big(\int_{t-\epsilon_1^\theta}^{t} \vert Bx(u) \vert > \epsilon_1^\gamma \Big).
\end{align*}
Since $\vert \sigma(y) \vert > 0$ for all $y$, when $\theta < \frac12$ one can easily see that the first summand in above inequality is vanished. By using Chebyshev's inequality and applying Equation (\ref{ss}), for every $q>1$ 
\begin{align*}
  P\Big(\int_{t-\epsilon_1^\theta}^{t} \vert Bx(u) \vert > \epsilon_1^\gamma \Big) \leq \frac{1}{\epsilon_1^{\gamma q}}\mathbb{E}\Big( \sup_{t-\epsilon_1^\theta \leq u \leq t} \vert Bx(u) \vert^q \Big) \leq B^qC_{0,q}\epsilon_1^{(\theta-\gamma) q}.
\end{align*}
So, choosing $0 < \gamma < \theta < \frac12$ result the assertion.
\end{proof}

\section{Conclusion}
In this article, we demonstrate the problem of existence and uniqueness of solution to a stochastic differential equations with delay driven by weighted fractional Brownian motion. We introduce the solution step by step when proceeding with delay in time. This solution is Malliavin diffrentiable of higher order and its Malliavin derivatives have uniformly bounded moments. The solution has an infinitely differentiable density with respect to Lebesgue's measure on $\mathbb{R}^d$ for $d\geq 1$. Our result allow one apply this result in many applications such as numerical methods, for instance one can easily  check that Euler approximation process of the SDDE (\ref{s1}),  in continuous version, has Malliavin derivatives with uniformly bounded moments. Also, one can result that the law of Euler approximation process is smooth, which is essential to show weak convergence of approximated process to the true solution. \\

{\bf Acknowledgement}: 
This research did not receive any specific grant from funding agencies in the public, commercial, ornot-for-profit sectors.\\

 \end{document}